\numberwithin{equation}{section}
\numberwithin{figure}{section}
\theoremstyle{plain}
\newtheorem{thm}{\protect\theoremname}
\theoremstyle{remark}
\newtheorem{remark}[thm]{\protect\remarkname}
\newtheorem*{rem}{\protect\remarkname}
\theoremstyle{plain}
\newtheorem{prop}[thm]{\protect\propositionname}
\theoremstyle{plain}
\newtheorem{lem}[thm]{\protect\lemmaname}
\newcommand{\cO}{\mathcal{O}}
\newcommand{\abs}[1]{\ensuremath{|#1|}}
\newcommand{\Abs}[1]{\ensuremath{\left|#1\right|}}
\newcommand{\card}{\mathop{\rm Card}}
\renewcommand{\epsilon}{\varepsilon}
\renewcommand{\phi}{\varphi}
  \providecommand{\lemmaname}{Lemma}
  \providecommand{\propositionname}{Proposition}
  \providecommand{\theoremname}{Theorem}
\providecommand{\theoremname}{Theorem}
\providecommand{\lemmaname}{Lemma}
\providecommand{\propositionname}{Proposition}
\providecommand{\remarkname}{Remark}
\providecommand{\theoremname}{Theorem}
\providecommand{\lemmaname}{Lemma}
\providecommand{\propositionname}{Proposition}
\providecommand{\remarkname}{Remark}
\providecommand{\theoremname}{Theorem}
\providecommand{\lemmaname}{Lemma}
\providecommand{\propositionname}{Proposition}
\providecommand{\remarkname}{Remark}
\providecommand{\theoremname}{Theorem}
\providecommand{\lemmaname}{Lemma}
\providecommand{\propositionname}{Proposition}
\providecommand{\remarkname}{Remark}
\providecommand{\theoremname}{Theorem}
\providecommand{\lemmaname}{Lemma}
\providecommand{\propositionname}{Proposition}
\providecommand{\remarkname}{Remark}
\providecommand{\theoremname}{Theorem}
\begin{document}
\title[On the Fourier coefficients]{On the Fourier coefficients of powers\\ of a finite Blaschke product}
\author{Alexander Borichev}
\address{Aix-Marseille Universit\'e, CNRS, Centrale Marseille, I2M, Marseille,
France}
\email{alexander.borichev@math.cnrs.fr}
\author{Karine Fouchet}
\address{Aix-Marseille Universit\'e, Laboratoire ADEF, Campus Universitaire de
Saint-J\'er\^ome, 52 Avenue Escadrille Normandie Niemen, 13013 Marseille, France\vspace*{-0.15cm}}
\address{Aix-Marseille Universit\'e, CNRS, Centrale Marseille, I2M, Marseille,
France}
\email{karine.isambard@univ-amu.fr }
\author{Rachid Zarouf}
\address{Aix-Marseille Universit\'e, Laboratoire ADEF, Campus Universitaire de
Saint-J\'er\^ome, 52 Avenue Escadrille Normandie Niemen, 13013 Marseille, France}
\email{rachid.zarouf@univ-amu.fr}

\keywords{Fourier coefficients, finite Blaschke products, van der Corput lemma, condition number of matrices.}
\subjclass[2020]{30J10, 42A16, 41A60, 15A60.}
\thanks{The work of R.Z. was supported by the pilot center Ampiric, funded by the France 2030 Investment Program operated by the Caisse des D\'ep\^ots.}

\begin{abstract}
Given a finite Blaschke product $B$ we prove asymptotically
sharp estimates on the $\ell^{\infty}$-norm of the sequence of
the Fourier coefficients of $B^{n}$
as $n$ tends to $\infty$. We provide constructive
examples which  
show that our estimates are sharp.
As an application we construct
a sequence of $n\times n$ invertible matrices $T$ with 
arbitrary spectrum in the unit disk and such that the quantity
$|\det{T}|\cdot\|T^{-1}\|\cdot\|T\|^{1-n}$ grows as
a power of $n$. This is motivated by Sch\"affer's question
on norms of inverses.
\end{abstract}

\maketitle

\section{Introduction}

Let $\mathbb{D}$ be the unit disk of the complex plane, $m\ge1$, let $\sigma=\left(\lambda_{1},\dots,\lambda_{m}\right)\in\mathbb{D}^{m}$
and let $B$ be the finite Blaschke product of degree $m\ge1$ associated
to $\sigma$: 
\[
B(z)=\prod_{j=1}^{m}b_{\lambda_j}(z),
\]
where $b_{\lambda}(z)=\frac{z-\lambda}{1-\bar{\lambda}z}$ is the
Blaschke factor corresponding to $\lambda\in\mathbb{D}$. 
Here and later on we omit the standard unimodular factor $\bar\lambda/|\lambda|$ which is of no importance for the questions we study here. Furthermore, we assume that $0\not\in\sigma$. 
For $k\ge0$
we consider the $k^{{\rm th}}$-Fourier coefficient of $B^{n}$ defined
by: 
\[
\widehat{B^{n}}(k)=\frac{1}{2\pi}\int_{0}^{2\pi}B^n(e^{{\rm i}\theta})e^{-\rm ik\theta}\,d\theta.
\]
We study the asymptotic behavior of the $\ell^{\infty}$-norm
of the sequence $\bigl(\widehat{B^{n}}(k)\bigr)_{k\ge0}$:
\[
\|\widehat{B^{n}}\|_{\ell^{\infty}}:=\sup_{k\ge0}\abs{\widehat{B^{n}}(k)},
\]
 as $n$ grows large.{} We use some standard notation
from asymptotic analysis. For two positive functions
$f,g$ we say that $f$ is
dominated by $g$, denoted by $f\lesssim g$, if there is a constant
$c>0$ such that $f\le cg$. We say that $f$ and $g$ are comparable,
denoted by $f\asymp g$, if both $f\lesssim g$ and $g\lesssim f$.
\\The case $B=b_{\lambda}$, $\lambda$ being arbitrary in $\mathbb{D}\setminus\{0\}$,
is well-studied \cite{BCP,SZ,BFZ,Mey} and it is known that 
\begin{equation}
\|\widehat{b_{\lambda}^{n}}\|_{\ell^{\infty}}\asymp n^{-1/3}.\label{eq:basic}
\end{equation}
We mention in passing that Y.~Meyer recently rediscovered the upper bound in \eqref{eq:basic} in view of applying it to some  sparse crystalline measure constructions \cite{Mey}.
Precise asymptotic formulas for the $k^{{\rm th}}$ Fourier
coefficients of $b_{\lambda}^{n}$, $k\in[0,\infty)$ as $n\rightarrow\infty$
have been recently obtained in \cite{BFZ} where the authors distinguish
several regions of different asymptotic behavior of $\widehat{b_{\lambda}^{n}}(k)$
in terms of $k$ and $n.$ These asymptotic formulas are applied to
the construction of strongly annular functions with Taylor coefficients
satisfying sharp summation properties, which improves and generalizes
the results in \cite{BCP}. \\
Information about those coefficients has also recently
been exploited in operator theory~\cite{LLQR} to identify decreasing
weights for which composition operators are bounded on weighted Hardy
spaces of Hilbert type. Observe also that a connection was earlier
established in \cite{BS} between the asymptotics of the Fourier coefficients
of $B^{n}$ and the boundedness of the composition operator $\mathcal C_B$, $\mathcal C_B(f)=f\circ B$ 
on the analytic Beurling--Sobolev space of analytic functions
in $\mathbb{D}$ whose sequence of Taylor coefficients belongs to
$l^{p},$ $p\in[1,\infty]$. Generally speaking, to verify whether
$\mathcal C_B$ is a bounded linear operator from one Banach space $X$
of analytic functions into another, say $Y$, it is often enough to
know the asymptotic behavior of $\|B^{n}\|_Y$. It is shown in
particular in \cite{BS} that~for any finite Blaschke product $B$
\begin{equation}
\|\widehat{B^{n}}\|_{\ell^{p}}\asymp n^{\frac{2-p}{2p}}\ \textnormal{for}\ p\in[1,2]\label{eq:BS}
\end{equation}
and that
\begin{equation}
\|\widehat{B^{n}}\|_{\ell^{p}}\gtrsim n^{\frac{2-p}{2p}}\ \textnormal{for}\ p\in[2,\infty].\label{eq:BS_Weak}
\end{equation}
The study of the case $p=1$ in \eqref{eq:BS} with a single Blaschke factor $B=b_{\lambda}$
was probably initiated by J.-P.~Kahane~\cite{Kah}. Applying van
der Corput type estimates on $\widehat{b_{\lambda}^{n}}(k)$ \cite[p. 253]{Kah},
he proved that $\|\widehat{b_{\lambda}^{n}}\|_{\ell^{1}}\asymp n^{-1/2}$
which is in line with \eqref{eq:BS}. Notice that his motivation \cite[Theorem 1]{Kah}
was different from the above: he generalized a theorem by Z.~K.~Leibenson
\cite{Leib}, which is a special case of a theorem~\cite[Theorem 4.1.3]{Rud}
about homomorphisms of group algebras due to P.~T.~Cohen. The exact
asymptotic $n$-dependency of $\|\widehat{b_{\lambda}^{n}}\|_{\ell^{p}}$
for $p\in[1,\infty]$ was described in~\cite{SZ} and a change of asymptotic behavior 
at $p=4$ was in particular discovered \cite{SZ,BFZ}. In this paper
we focus on the case $p=\infty$: in this case the estimate \eqref{eq:BS_Weak}
reads 
\[
\|\widehat{B^{n}}\|_{\ell^{\infty}}\gtrsim n^{-1/2}
\]
for any finite Blaschke product $B$. We will prove that $\|\widehat{B^{n}}\|_{\ell^{\infty}}$
is never of order $n^{-1/2}$. More precisely, for any finite Blaschke product
$B$, there exists an integer $N\ge3$ such that
\begin{equation}
\|\widehat{B^{n}}\|_{\ell^{\infty}}\asymp n^{-1/N}.\label{eq:main_estimate}
\end{equation}
For every integer $N\ge3$ we will exhibit an example of a Blaschke
product $B$ of degree $N$ satisfying estimate \eqref{eq:main_estimate}.
It is known \cite{SZ} that for $N=3$ every Blaschke product $B(z)\not=z$ of degree
1 satisfies \eqref{eq:main_estimate}. For $N=5$ -- respectively
$N=7$ -- we produce explicit examples of Blaschke products
of degree 2 -- respectively degree 4 -- such that \eqref{eq:main_estimate}
holds. \\
Finally we apply \eqref{eq:main_estimate} to construct
a new class of $n\times n$ invertible matrices $T$ with arbitrary spectrum 
in $\mathbb{D}$ such that 
\begin{equation}
\frac{|\det{T}|\cdot\|T^{-1}\|}{\|T\|^{n-1}}\gtrsim n^{1/N}\label{eq:schaffer_arb_spec}
\end{equation}
for some integer $N\ge3.$ This is motivated by Sch\"affer's question on the
norms of $n\times n$ invertible matrices, see Section 3. 

\subsection*{Outline of the paper}

The paper is organized as follows. In Section 2 we establish sharp estimates on the $\ell^{\infty}$-norm of the sequence $(\widehat{B^{n}}(k))_{k\ge0}$
for large $n$, where $B$ is an arbitrary Blaschke product, see Theorem \ref{thm:upper_bd}.
Concrete examples of finite Blaschke products achieving 
estimate \eqref{eq:main_estimate} are provided in Theorem \ref{thm:constr_examples}.
In Section 3 we state Theorem \ref{prop_phi}, which extends the work \cite{SZ,SZ1,SZ2} already undertaken to answer a question raised by Sch\"affer on norms of inverse matrices. In Theorem \ref{prop_phi} we exhibit a sequence of $n\times n$ matrices $T$ satisfying \eqref{eq:schaffer_arb_spec}, which we obtain as an application of the techniques used to prove Theorem~\ref{thm:upper_bd}. Section 4 is devoted to the proof
of Theorem \ref{thm:upper_bd} which combines the use of two van der Corput lemmata with a stationary phase type argument. In Section 5 we prove Theorem \ref{thm:constr_examples}
and in particular we construct finite Blaschke products $B$ of degree
2 -- respectively 4 -- such that $\|\widehat{B^{n}}\|_{\ell^\infty}$
grows as $n^{-1/5}$ -- respectively $n^{-1/7}$. Finally in Section 6 we prove Theorem~\ref{prop_phi} combining a duality method with 
the techniques that lead to the proof of Theorem~\ref{thm:upper_bd}. An alternative proof of Theorem~\ref{prop_phi} using Theorem~\ref{thm:upper_bd} is provided in Remark~\ref{rem9}.

\section{Notation and statement of our results}

In this section we give sharp asymptotics for the $\ell^{\infty}$-norm of the sequence $\bigl(\widehat{B^{n}}(k)\bigr)_{k\ge0}$,
see Theorem \ref{thm:upper_bd} below, and constructive examples that
achieve these asymptotics, see Theorem \ref{thm:constr_examples} below.
We denote by $\psi_{B}(\theta)$ the continuous argument (determined modulo $2\pi$)
of $B(e^{{\rm i}\theta})$:
$$
B(e^{{\rm i}\theta})=\exp\left({\rm i}\psi_{B}(\theta)\right)
$$
and by $(\xi_\ell)_{\ell=1}^s$ the sequence of (consecutive) zeros
of $\psi_{B}''$ on $[0,2\pi)$ with respective multiplicities $(N_\ell-2)_{\ell=1}^s$, 
$N_\ell\ge 3$. This means that 
\[
0\le \xi_{1}<\xi_{2}<\ldots<\xi_{s}<2\pi
\]
and that for any $\ell=1,\ldots, s$ 
\[
\psi''_{B}(\xi_\ell)=\ldots=\psi_{B}^{(N_\ell-1)}(\xi_\ell)=0,\qquad\psi_{B}^{(N_\ell)}(\xi_\ell)\not=0.
\]
Without loss of generality (using a simple rotation if necessary), we can assume that $\xi_1>0$.

\begin{thm}
\label{thm:upper_bd} Let $B$ be a finite Blaschke product,
and let $\psi_{B}$, $(\xi_\ell)_{\ell=1}^{s}$, $(N_\ell)_{\ell=1}^{s}$ be defined as
above. Then we have 
$$
\|\widehat{B^n}\|_{\ell^\infty}\asymp n^{-1/N},
$$
where $N=\max_{1\le \ell\le s}N_\ell$. 
\end{thm}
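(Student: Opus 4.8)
The plan is to analyze the oscillatory integral
$$
\widehat{B^n}(k)=\frac{1}{2\pi}\int_0^{2\pi}\exp\bigl({\rm i}(n\psi_B(\theta)-k\theta)\bigr)\,d\theta,
$$
and to extract its size uniformly in $k\ge0$ via stationary phase. The phase is $\Phi(\theta)=n\psi_B(\theta)-k\theta$, whose derivative is $n\psi_B'(\theta)-k$. Since $B$ maps the circle to the circle with winding number $m$, the function $\psi_B'$ is strictly positive (each Blaschke factor contributes a positive term to $\psi_{b_\lambda}'(\theta)=\frac{1-|\lambda|^2}{|e^{{\rm i}\theta}-\lambda|^2}$), smooth, $2\pi$-periodic, and has total integral $2\pi m$. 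So critical points occur exactly where $\psi_B'(\theta)=k/n$, and these exist only for $k/n$ in the range of $\psi_B'$, i.e. $k\asymp n$; for $k$ outside a bounded multiple of $n$ the phase has no stationary point and repeated integration by parts gives rapid decay, so those $k$ never contribute to the sup. This reduces everything to a compact range of the ratio $t:=k/n\in[\min\psi_B',\max\psi_B']$.

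First I would fix $t$ in this range and count/classify the solutions of $\psi_B'(\theta)=t$ together with the vanishing of higher derivatives there. At a critical point $\theta_0$ the relevant quantity is the first nonvanishing derivative of $\psi_B$ beyond the first, i.e. the smallest $j\ge2$ with $\psi_B^{(j)}(\theta_0)\ne0$. The classical van der Corput estimate then gives $|\widehat{B^n}(k)|\lesssim n^{-1/j}$ near that point, and the set of $\theta_0$ where $j$ can be as large as $N_\ell$ is precisely the zero set of $\psi_B''$ with its multiplicities, as defined in the statement. The maximal such $j$ over all $\theta_0$ and all admissible $t$ is $N=\max_\ell N_\ell$. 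Here the two van der Corput lemmata alluded to in the paper's outline come in: one handles a single nondegenerate-up-to-order-$j$ critical point (the sublevel-set form, giving the $n^{-1/j}$ bound with a constant depending only on a lower bound for $|\psi_B^{(j)}|$), and the second, inductive form lets one cover a whole interval of $\theta$ by finitely many pieces with a uniform constant, since $\psi_B$ is real-analytic and hence $\psi_B''$ has finitely many zeros of finite order. Summing the $O(1)$ pieces and taking the sup over the compact $t$-range yields the upper bound $\|\widehat{B^n}\|_{\ell^\infty}\lesssim n^{-1/N}$.

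For the matching lower bound I would localize near a point $\xi_{\ell_0}$ realizing the maximum $N=N_{\ell_0}$, and choose $k=k_n$ to be the nearest integer to $n\psi_B'(\xi_{\ell_0})$, so that the phase $\Phi(\theta)=n\psi_B(\theta)-k_n\theta$ has a genuinely degenerate stationary point of exact order $N$ at (a point within $O(1/n)$ of) $\xi_{\ell_0}$: there $\Phi'$ vanishes to order $N-1$ and $\Phi^{(N)}\asymp n$. A stationary phase computation — rescaling $\theta-\xi_{\ell_0}=n^{-1/N}u$ — shows the local contribution is $\asymp n^{-1/N}$ times an absolutely convergent oscillatory integral of the form $\int e^{{\rm i}(c u^N+\text{lower order})}\,\varphi(u)\,du$ with $c\ne0$, which is a nonzero constant; the tails and the rest of the circle contribute lower-order terms (one has to check the competing contribution from the near-integer choice of $k$ does not cancel the main term, which is where a little care with the phase expansion is needed). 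This gives $\|\widehat{B^n}\|_{\ell^\infty}\ge|\widehat{B^n}(k_n)|\gtrsim n^{-1/N}$.

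The main obstacle is the uniformity in $k$: one must show that the van der Corput constants can be chosen independent of $k$ (equivalently of the level $t=k/n$) even as $t$ approaches a value for which two critical points of $\psi_B'$ collide or a critical point degenerates further. This is a compactness-plus-real-analyticity argument — the "catastrophe set" $\{(\theta,t):\psi_B'(\theta)=t,\ \psi_B''(\theta)=0\}$ is finite, and near each of its points one needs a normal-form / Malgrange-type bound for the family of phases — and it is exactly the delicate part that the paper's "stationary phase type argument" is designed to handle. A secondary technical point is ruling out, in the lower bound, an accidental cancellation between the contributions of the degenerate critical point and of any other critical point at the same level $t=\psi_B'(\xi_{\ell_0})$; since the latter are of strictly lower order $n^{-1/j}$ with $j<N$ (or else they would also be of order $N$ and could only add constructively after possibly adjusting $k_n$), this is manageable but needs to be stated carefully.
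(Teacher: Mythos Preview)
Your upper bound sketch is broadly sound and close in spirit to the paper's, though the paper proceeds more elementarily: instead of invoking the $j$-th order van der Corput lemma and a uniformity/compactness argument, it simply excises intervals of length $2n^{-1/N}$ around each zero $\xi_\ell$ of $\psi_B''$ (bounding those pieces trivially by their length) and applies the \emph{second}-order van der Corput lemma on the complement, where $|\psi_B''|\gtrsim n^{-(N-2)/N}$. This sidesteps the uniformity issues you flag as ``the main obstacle'' entirely.

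The genuine gap is in your lower bound, specifically the dismissal of cancellation. Your claim that other stationary points of order exactly $N$ at the same level $t=\psi_B'(\xi_{\ell_0})$ ``could only add constructively after possibly adjusting $k_n$'' is not justified and is in general false for a single choice of $k_n$. If $\mathcal L=\{\ell:N_\ell=N,\ \psi_B'(\xi_\ell)=\psi_B'(\xi_{\ell_0})\}$ has cardinality $D>1$, then each $\xi_\ell$, $\ell\in\mathcal L$, contributes a term of exact order $n^{-1/N}$ with a phase factor $e^{{\rm i}n\psi_B(\xi_\ell)-{\rm i}k\xi_\ell}$ (times a nonzero constant depending on $\psi_B^{(N)}(\xi_\ell)$), and these can interfere destructively. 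The paper's device is to consider $D$ consecutive integers $k_d=[n\psi_B'(\xi_{\ell_0})]+d$, $0\le d<D$; the leading terms then form
\[
\Sigma_d=\sum_{\ell\in\mathcal L}c_\ell\,e^{-{\rm i}d\xi_\ell},
\]
and since the $\xi_\ell$ are distinct, the $D\times D$ matrix $(e^{-{\rm i}d\xi_\ell})$ is invertible (Vandermonde in $e^{-{\rm i}\xi_\ell}$), so $\max_{0\le d<D}|\Sigma_d|\gtrsim\sum_\ell|c_\ell|\asymp n^{-1/N}$. This Vandermonde step is the missing idea; without it, a single $k_n$ need not witness the lower bound.
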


It is known \cite{BFZ} that if $B=b_{\lambda}$ is the
Blaschke factor associated to any fixed $\lambda\in\mathbb{D}\setminus\{0\},$
then $N=3$ and 
\[
\|\widehat{B^{n}}\|_{\ell^{\infty}}\asymp n^{-1/3},
\]
which proves the sharpness of Theorem \ref{thm:upper_bd} for $N=3$.
Here we establish the following result. 

\begin{thm}
\label{thm:constr_examples} For every $N\ge3$, there exists a Blaschke
product $B$ of degree $N$ such that 
\begin{equation}
\|\widehat{B^{n}}\|_{\ell^\infty}\asymp n^{-1/N}.\label{B}
\end{equation}
Moreover:\\
{\rm(1)} For $N=3$ every Blaschke product of degree 1, different from
identity, satisfies \eqref{B}. \\
{\rm(2)} For $N=5$ there exists a Blaschke product of degree 2 such that
\eqref{B} holds. \\
{\rm(3)} For $N=7$ there exists a Blaschke product of degree 4 such that
\eqref{B} holds. 
\end{thm}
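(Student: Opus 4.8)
By Theorem~\ref{thm:upper_bd}, for any finite Blaschke product $B$ one has $\|\widehat{B^{n}}\|_{\ell^{\infty}}\asymp n^{-1/N}$ with $N=\max_{1\le\ell\le s}N_{\ell}$, where the integers $N_{\ell}-2$ are the multiplicities of the zeros of $\psi_{B}''$ on $[0,2\pi)$. So it suffices, for each target $N\ge3$, to produce a Blaschke product $B$ of the stated degree whose phase derivative $\psi_{B}'(\theta)=\sum_{j=1}^{m}P_{\lambda_{j}}(\theta)=\bigl|B'(e^{{\rm i}\theta})\bigr|$ (with $P_{\lambda}$ the Poisson kernel, so $\psi_{B}'$ is real-analytic and positive) has a critical point of order exactly $N-2$ and no critical point of larger order, since then the zeros of $\psi_{B}''$ are precisely those critical points with multiplicity one less than the order. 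For part (1) this is immediate: for $B=b_{\lambda}$ with $\lambda=re^{{\rm i}\alpha}$, $r\in(0,1)$, we have $\psi_{B}'(\theta)=\frac{1-r^{2}}{1-2r\cos(\theta-\alpha)+r^{2}}$, whose derivative vanishes only at $\theta=\alpha,\alpha+\pi$, and evaluating $\psi_{B}'''$ there shows both zeros of $\psi_{B}''$ are simple; hence $s=2$, $N_{1}=N_{2}=3$, $N=3$, and Theorem~\ref{thm:upper_bd} gives \eqref{B} (recovering \eqref{eq:basic}).

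For the remaining cases I would take $B$ with zero set invariant under complex conjugation, so that $B(\bar z)=\overline{B(z)}$, hence $\psi_{B}(-\theta)\equiv-\psi_{B}(\theta)$ and $\psi_{B}'$ is even; being rational in $e^{{\rm i}\theta}$ it then equals $R(\cos\theta)$ for a rational function $R$ finite and positive on $[-1,1]$, and $\psi_{B}''(\theta)=-\sin\theta\,R'(\cos\theta)$. Consequently the order of vanishing of $\psi_{B}''$ at $\theta=0$ is $2p+1$, where $p\ge0$ is the order of vanishing of $R'$ at $x=1$, i.e.\ $N_{\ell}=2p+3$ there. Thus to realize an odd $N=2p+3$ it is enough to impose the $p$ equations $R'(1)=\dots=R^{(p)}(1)=0$, check $R^{(p+1)}(1)\neq0$, and check that every other critical point of $\psi_{B}'$ has order $\le2p+1$; for even $N$ I would instead make $R'$ vanish to order exactly $N-2$ at an interior point $x_{0}=\cos\xi\in(-1,1)$, which (since $\cos\theta-x_{0}$ has a simple zero at $\xi$ and $\sin\xi\neq0$) forces $\psi_{B}''$ to vanish to order exactly $N-2$ at $\xi$.

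For $N=5$ I would take $B$ of degree $2$ with zeros $\lambda,\bar\lambda$, so $R$ is a ratio of quadratics in $\cos\theta$ in the two real parameters $|\lambda|,\arg\lambda$ and $R'(1)=0$ is one explicit equation; a solution with $R''(1)\neq0$ and nondegenerate critical points elsewhere (at $\theta=\pi$ and the interior zeros of $R'$) gives $\max_{\ell}N_{\ell}=5$. For $N=7$ I would take $B$ of degree $4$ with zeros $\lambda_{1},\bar\lambda_{1},\lambda_{2},\bar\lambda_{2}$, so $R$ is a ratio of quartics in four real parameters and $R'(1)=R''(1)=0$ are two equations, solvable e.g.\ by reducing to a one-parameter subfamily and using the intermediate value theorem; checking $R'''(1)\neq0$ and nondegeneracy elsewhere gives $\max_{\ell}N_{\ell}=7$. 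For general $N$, a conjugation-symmetric Blaschke product of degree $N$ yields an $R$ that is a ratio of degree-$N$ polynomials in $\cos\theta$ carrying $N$ free real parameters, so the vanishing conditions ($p=(N-3)/2$ endpoint equations when $N$ is odd, or $N-2$ equations $R'(x_{0})=\dots=R^{(N-2)}(x_{0})=0$ at a free interior point $x_{0}$ when $N$ is even) form a heavily underdetermined system; I would conclude either by an explicit parametrized computation or by a degree/continuity argument producing a solution with all zeros strictly inside $\mathbb{D}$, then verify the open conditions $R^{(p+1)}(1)\neq0$ (resp.\ $R^{(N-1)}(x_{0})\neq0$) at the flat point and order $\le N-2$ at every other critical point.

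The main obstacle is precisely this general construction: one must solve the nonlinear vanishing system while keeping all zeros strictly inside $\mathbb{D}$ (equivalently keeping $R$ pole-free and positive on $[-1,1]$) and at the same time rule out any critical point of $\psi_{B}'$ of order $>N-2$ elsewhere on the circle. The cases $N=5,7$ reduce to an explicit but somewhat involved hand computation, and the even-$N$ cases are a little more delicate since the location $x_{0}$ of the flat interior critical point must also be tracked.
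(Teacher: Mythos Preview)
Your reduction via Theorem~\ref{thm:upper_bd} and your treatment of the case $N=3$ are correct and match the paper. For $N=5$ and $N=7$ your framework (conjugate-symmetric zero set, Cayley-type substitution $\lambda_j=(w_j-1)/(w_j+1)$, reducing to a rational function in a real variable) is essentially the same as the paper's; the paper carries the computation through with explicit choices of $w_j$ (e.g.\ $w=2+{\rm i}$ for $N=5$, $w_1=1+2{\rm i}/\sqrt3$, $w_2=2+{\rm i}/\sqrt3$ for $N=7$) and then rules out extra high-order zeros by observing that the numerator of $h_1$ is a polynomial of degree at most $2$ (resp.\ $6$ with two known zeros), so it cannot carry a further zero of order $3$ (resp.\ $5$). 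Your sketch is missing exactly this kind of concrete degree-count, but the idea is the same.

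The genuine gap is the general $N$. You correctly name the two obstacles (solving the nonlinear vanishing system inside $\mathbb D$, and excluding higher-order zeros of $\psi_B''$ elsewhere), but you do not resolve either; ``heavily underdetermined'' does not by itself give a solution in the right region, and a degree/continuity argument as you describe would still leave the global nondegeneracy check open. The paper's construction sidesteps both difficulties simultaneously with a single idea: take the $N$ zeros to be $\lambda_j=\overline{\zeta u^j}/(1+\overline{\zeta u^j})$ with $u=e^{2\pi{\rm i}/N}$ and $\zeta=t\,e^{\pi{\rm i}(N-1)/(2N)}$, $t>0$ small. The $N$-fold rotational averaging kills all terms in the expansion of $\psi_B'$ of order below $N$ in $(e^{{\rm i}\theta}-1)$, forcing $\psi_B''(0)=\dots=\psi_B^{(N-1)}(0)=0$ \emph{for every} $t$, while the phase of $\zeta$ is chosen so that the first surviving term is real and gives $\psi_B^{(N)}(0)\neq0$. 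The small parameter $t$ then does the global work: $\psi_B''=2Nt^N\Re h+O(t^{2N})$ for an explicit function $h$ independent of $t$, and one checks directly that $\sum_{s=0}^{N-2}|\Re h^{(s)}(\theta)|>0$ on all of $[-\pi,\pi]$, so for $t$ small enough $\psi_B''$ can have no zero of order $N-1$ anywhere. This perturbative trick is what your outline lacks; without it (or a comparably explicit substitute) the general case is not proved.
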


\begin{rem}
The following two questions remain open:\\
(1) Every nontrivial Blaschke product $B$ of
degree $1$ satisfies \eqref{B} with $N=3$. In the case $N=5$, a Blaschke
product of degree $2$ satisfies \eqref{B}, and in the case $N=7$, a Blaschke product
of degree $4$ satisfies \eqref{B}. 
What is the minimal degree of a Blaschke product $B_{N}$ such that \eqref{B} holds for a fixed $N\ge 6$?\\
(2) Is there an infinite Blaschke product $B$ such that for $n\ge1$
we have 
\[
\|\widehat{B^{n}}\|_{\ell^\infty}\asymp\frac{1}{\log(n+1)}\,?
\]
\end{rem}

\section{Application to operator theory}

A well-established circle of questions in Operator Theory concerns estimating the norm of the inverse of an invertible $n\times n$ matrix $T$ on an $n$-dimensional Banach space $X$. 
In the early 1970's, B.~L.~Van der Waerden, W.~A.~Coppel and J.~J.~Sch\"affer \cite{SJ} studied the smallest quantity $C$, denoted $C_{\mathcal{B}}(n)$,
such that 
\[
|\det{T}|\cdot\|T^{-1}\|\le C \|T\|^{n-1}
\]
for any invertible $T$ and for any $X$. 
In 1970, Sch\"affer \cite{SJ} proved the estimate $C_{\mathcal{B}}(n)\le\sqrt{en}$,
and showed that the inequality 
\begin{equation}
|\det T|\cdot\|T^{-1}\|\le 2\|T\|^{n-1}\label{eq:l1linfty-1}
\end{equation}
holds for any invertible $T$ acting on $\mathbb{C}^{n}$ endowed
with the $\ell^{1}$-norm or with the $\ell^{\infty}$-norm (and that
\eqref{eq:l1linfty-1} is optimal in both cases). This led him to
make the conjecture -- nowadays known as \textit{Sch\"affer's
conjecture}{} -- that $C_{\mathcal{B}}(n)=2$, $n\in\mathbb{N}$.
The latter was disproved first by E.~Gluskin, M.~Meyer and A.~Pajor
\cite{GMP}, J.~Bourgain \cite{GMP} \footnote{The same article \cite{GMP} contains an appendix
with a stronger estimate due to Bourgain.}{} and later by H.~Queff\'elec \cite{Que} who also proved
that $C_{\mathcal{B}}(n)\gtrsim\sqrt{n}$. The above mentioned results
make use of the following analytic expression for $C_{\mathcal{B}}(n)$,
given in \cite{GMP}, in terms of a ``max-min-type'' 
optimization problem : 
\[
C_{\mathcal{B}}(n)=\sup_{(\lambda_1,\ldots,\lambda_n)\in\mathbb{D}^{n}}\Phi(\lambda_1,\ldots,\lambda_n),
\]
where 
\begin{multline*}
\Phi(\lambda_1,\ldots,\lambda_n)\\:=\inf\biggl\{ \sum_{k=1}^{\infty}\abs{a_k}: f(z)=\prod_{j=1}^{n}\lambda_j+\sum_{k=1}^{\infty}a_kz^k,\,f(\lambda_j)=0,\,j=1,\ldots, n\biggr\} .
\end{multline*}
The proof by Queff\'elec makes use of Bourgain's lower bound on $\Phi$ \cite[Inequality (2.2)]{SZ2}, 
which he combines with a number theoretic argument to prove the existence
of a family $(\lambda_1,\ldots,\lambda_n)$ on the circle of
radius $1-1/n$ such that $\Phi(\lambda_1,\ldots,\lambda_n)\gtrsim\sqrt{n}$.
\\The question of finding a concrete exemple of a sequence $(\lambda_1,\ldots,\lambda_n)$
in $\mathbb{D}^n$ such that $\Phi(\lambda_1,\ldots,\lambda_n)\gtrsim\sqrt{n}$ 
has been recently addressed by O.~Szehr--R.~Zarouf \cite{SZ2} who
proved that for every $\lambda\in\mathbb{D}\setminus\{0\}$ we have $\Phi(\lambda,\ldots,\lambda)\gtrsim\sqrt{n}$, 
where $\lambda$ is repeated $n$ times. 
As a consequence, they construct in \cite{SZ2} an explicit class
of counterexamples to Sch\"affer's conjecture: a sequence of invertible lower triangular  
Toeplitz matrices $T_{\lambda}\in\mathcal{M}_{n}$ with singleton
spectrum $\{\lambda\}\subset\mathbb{D}\backslash\{0\}$ such that
\[
\abs{\lambda}^{n}\|T_\lambda^{-1}\|\ge c(\lambda)\sqrt{n}\|T_\lambda\|^{n-1},
\]
where $c(\lambda)>0$ depends only on $\lambda$. 

Furthermore, Gluskin--Meyer--Pajor \cite[p.~2, lines 25--26]{GMP}
mention that it is of interest to find concrete examples $(\lambda_1,\ldots,\lambda_n)$
for which $\Phi(\lambda_1,\ldots,\lambda_n)$ grows. We will
combine the approach in \cite{SZ2} with Theorem \ref{thm:upper_bd}
to prove that given a fixed $m\ge1$ and an arbitrary sequence $(\lambda_1,\lambda_2,\ldots,\lambda_m)\in\mathbb{D}^{m}$, 
the sequence 
$$
(\lambda_1,\ldots,\lambda_1,\lambda_2,\ldots,\lambda_2,\ldots,\lambda_m,\ldots,\lambda_m)\in\mathbb{D}^{n\times m}
$$
(i.e. each $\lambda_{i}$ is repeated according to its multiplicity
$n$) satisfies the estimate 
\[
\Phi(\lambda_1,\ldots,\lambda_1,\lambda_2,\ldots,\lambda_2,\ldots,\lambda_m,\ldots,\lambda_m)\gtrsim n^{1/N}
\]
 for some integer $N\ge3$, as $n$ tends to infinity. As a consequence, we obtain a family of invertible lower triangular 
matrices $T\in\mathcal{M}_{nm}$ with arbitrary spectrum $(\lambda_1,\lambda_2,\ldots,\lambda_m)$ 
such that 
\[
|\det T|\cdot\|T^{-1}\|\gtrsim n^{1/N}\|T\|^{n-1}
\]
 for some integer $N\ge3$, as $n$ tends to infinity. Our construction of $T$ is given in Section~\ref{Schaeffer}. 

\subsection{Background}

We denote by $H(\mathbb{D})$ the space of the functions analytic in the
open unit disk $\mathbb{D}$, 
and by $H^{\infty}$ the Banach algebra of bounded analytic functions
in $\mathbb{D}$, endowed with the supremum norm. 
We recall that the Hardy space $H^{2}$ is defined as the
subspace of $H(\mathbb{D})$ consisting of the functions $f$ such
that 
\[
\Vert f\Vert_{H^{2}}^{2}:=\sup_{0\le r<1}\int_{\mathbb{T}}\left|f(rz)\right|^{2}\,d\nu(z)<\infty,
\]
where $\nu$ is the normalized Lebesgue measure on the unit circle $\mathbb{T}:=\left\{ z\in\mathbb{C}:\,|z|=1\right\} $.
Let $\sigma$ be a finite sequence of points in $\mathbb{D}.$ The
finite Blaschke product $B=B_{\sigma}$ corresponding to $\sigma$
is defined by 
\[
B=B_{\sigma}=\prod_{\lambda\in\sigma}b_{\lambda},
\]
where $b_{\lambda}=\frac{z-\lambda}{1-\overline{\lambda}z}$ is the
Blaschke factor corresponding to $\lambda\in\mathbb{D}$. Then one
defines the model space $K_{B}$ as the finite dimensional subspace
of $H^{2}$ given by 
\[
K_B:=\left(BH^{2}\right)^{\perp}=H^{2}\ominus BH^{2}.
\]
Let $\sigma
=(\lambda_{1},\dots,\lambda_{q})\in\mathbb{D}^q$. We set $f_k=\dfrac{1}{1-\overline{\lambda_{k}}z}$, $k=1,\ldots,q$.
Observe that $\|f_k\|_{H^{2}}=\left(1-\vert\lambda_k\vert^{2}\right)^{-1/2}$. The family $(f_k)_{1\le k\le n}$ is a basis of $K_B$.
Furthermore, the family $(e_k)_{1\le k\le n}$
given by 
\begin{equation*}
e_1=\frac{f_1}{\|f_1\|_{H^2}}\,\quad\mbox{and}\quad e_{k}=\frac{f_{k}}{\|f_k\|_{H^2}}{\displaystyle \prod_{j=1}^{k-1}}b_{\lambda_{j}},\quad k=2,\ldots,q, 
\end{equation*}
is an orthonormal basis of $K_B$ (known as the Malmquist--Walsh
basis, see \cite{NN}). 

The backward shift operator $S: f\mapsto (f-f(0))/z$ acts on $K_B$.

\subsection{Constructive counterexamples to Sch\"affer's conjecture}  \label{Schaeffer}
\phantom{A} We put 
$$
\sigma
=(\lambda_{1},\dots,\lambda_{1},\lambda_{2},\dots,\lambda_{2},\dots,\lambda_{m},\dots,\lambda_{m})\in\mathbb{D}^{n m}, 
$$
where distinct $\lambda_i$ are arbitrary in $\mathbb{D}\backslash\{0\}$ and are repeated $n$ times. 
We consider the Malmquist--Walsh basis of $K_B$ given by 
$$
e_{sn+t}=\biggl(\prod_{j=1}^sb^n_{\lambda_j} \biggr)\frac{f_{s+1}}{\|f_{s+1}\|_{H^2}}b_{\lambda_{s+1}}^{t-1},\qquad 0\le s<m,\,1\le t\le n.
$$
The backward shift operator $S$ has a lower triangular matrix with respect to this basis, with diagonal   
$(\lambda_{1},\dots,\lambda_{1},\dots,\lambda_{m},\dots,\lambda_{m})$, see \cite{SO} for the entry-wise description of this matrix. 
Hence, 
\begin{equation}  \label{det1}
\det S= \prod_{j=1}^m\lambda^n_j.
\end{equation}
We will use the duality method from \cite{SZ2} and combine
it with our upper estimates on the Fourier coefficients of the $n^{\text{th}}$
power of a finite Blaschke product $B$ (see Theorem \ref{thm:upper_bd})
to show that there exists a Banach space norm on $K_B$ 
such that the operator $S$ with spectrum $(\lambda_1,\lambda_2,\ldots,\lambda_m)$ acting 
there, satisfies the asymptotic relation 
$|\det S|\cdot \|S^{-1} \|\cdot \| S \|^{1-n} \gtrsim n^{1/N}$. 

Following \cite{GMP}, we choose the norm $\|\cdot\|_{\ell^\infty_A}$ on $K_B$:
$$
\|f\|_{\ell^\infty_A}=\|\widehat{f}\|_{\ell^\infty}.
$$
Then $\|S\|_{\ell^\infty_A\to \ell^\infty_A}\le 1$. Furthermore, if $\dim K_B>1$, then $\|S\|_{\ell^\infty_A\to \ell^\infty_A}=1$. 
We are now ready to state the corresponding result. 

\begin{thm} \label{prop_phi}
\label{new} Let 
$$
\sigma=(\lambda_1,\ldots,\lambda_1,\lambda_2,\ldots,\lambda_2,\ldots,\lambda_m,\ldots,\lambda_m)\in\mathbb{D}^{n m},
$$
where distinct $\lambda_i$ are arbitrary in $\mathbb{D}\backslash\{0\}$
and are repeated $n$ times. There exists an
integer $N \ge 3$ such that 
\begin{enumerate}
\item[(i)] $|\det S|\cdot \|S^{-1}\|_{\ell^\infty_A\to \ell^\infty_A}
\gtrsim {n^{1/N}}$
\\ and
\item[(ii)]  $\Phi(\lambda_1, \ldots, \lambda_1, \ldots, \lambda_m,\ldots, \lambda_m) \gtrsim n^{1/N} $.
\end{enumerate}
\end{thm}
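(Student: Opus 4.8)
The plan is to prove (i) first and then deduce (ii) by duality, exactly mirroring the structure announced in the outline. For (i), the key observation is that we have already chosen the norm $\|\cdot\|_{\ell^\infty_A}$ on $K_B$ so that $\|S\|_{\ell^\infty_A\to\ell^\infty_A}=1$ (when $\dim K_B>1$), so it suffices to show $|\det S|\cdot\|S^{-1}\|_{\ell^\infty_A\to\ell^\infty_A}\gtrsim n^{1/N}$. Combining this with \eqref{det1}, namely $\det S=\prod_{j=1}^m\lambda_j^n$, and the fact that $|\prod_j\lambda_j^n|$ is a fixed positive constant to the power $n$ — wait, this decays geometrically, so the point must be subtler: the norm of $S^{-1}$ must itself grow geometrically to compensate. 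The right way is to produce an explicit test vector. I would take $f\in K_B$ with $S^{-1}f$ having a controlled $\ell^\infty_A$-norm from below; the natural candidate is to relate $S^{-1}$ to multiplication by $1/B$ or to the reproducing-kernel-type functions, and to use that for $g\in K_B$ one has $\widehat{g}(k)$ expressible via the Fourier coefficients of $B^n$ through the Malmquist--Walsh expansion. Concretely, the element $\bigl(\prod_{j=1}^{m}\lambda_j^{-n}\bigr)^{1/2}$-type normalization coming from $B^n$ means $S^{-1}$ acts, up to the determinant factor, like convolution against the coefficient sequence of $B^{-n}$ restricted to $K_B$; the crucial quantitative input is that this sequence has $\ell^\infty$-norm comparable to $\|\widehat{B^n}\|_{\ell^\infty}$ after renormalization, which is $\asymp n^{-1/N}$ by Theorem~\ref{thm:upper_bd}. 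Thus $\|S^{-1}\|\gtrsim |\det S|^{-1}\cdot(\text{something}\asymp 1)/n^{-1/N}$? — this needs care, and getting the bookkeeping of the geometric factors to cancel correctly against $\det S$ is the main obstacle.

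More precisely, here is the duality mechanism I would use, following \cite{SZ2}. By definition of $\Phi$, picking $f\in H^\infty$ with $f(z)=\prod_j\lambda_j^n+\sum_{k\ge1}a_kz^k$ and $f$ vanishing on $\sigma$ (with multiplicities), one has $\Phi(\sigma)=\inf\sum|a_k|$ over such $f$. The function $f=\bigl(\prod_j\lambda_j^n\bigr)B$ is one admissible choice (since $B$ vanishes exactly on $\sigma$ and $B(0)=\prod_j(-\lambda_j)$, absorbing the unimodular constants we have agreed to drop), giving $\Phi(\sigma)\le\bigl|\prod_j\lambda_j\bigr|^n\|\widehat{B}\|_{\ell^1}$ — but we want a lower bound on $\Phi$. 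The duality identity is that $\Phi(\sigma)$ equals the norm of a certain functional, or equivalently, $\Phi(\sigma)\asymp |\det S|\cdot\|S^{-1}\|_{\ell^\infty_A\to\ell^\infty_A}$ up to absolute constants; this equivalence is precisely the bridge between (i) and (ii) and is essentially \cite[Inequality (2.2)]{SZ2} together with the Gluskin--Meyer--Pajor formula for $C_{\mathcal B}(n)$. So once (i) is established, (ii) follows immediately by quoting that equivalence; I would state it as a lemma with a one-line proof referencing \cite{GMP, SZ2}.

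The technical heart is therefore the lower bound $\|S^{-1}\|_{\ell^\infty_A\to\ell^\infty_A}\gtrsim n^{1/N}/|\det S|$. The approach: compute $S^{-1}$ on the Malmquist--Walsh basis, or better, identify $S^{-1}$ abstractly. On $K_B$ the operator $S$ is the compression of the backward shift; its inverse, since $S$ is invertible on the finite-dimensional $K_B$ (eigenvalues $\lambda_j\ne0$), can be written via the functional calculus or explicitly. The cleanest route: note $S$ on $K_B$ is unitarily equivalent (via the Malmquist--Walsh basis giving an orthonormal basis) to a companion-type matrix, and $S^{-1}$ applied to the cyclic vector produces, after rescaling by $\det S$, exactly the truncated Taylor coefficients of a function closely tied to $B^n$. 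One then picks the input vector $x\in K_B$ with $\|x\|_{\ell^\infty_A}\le 1$ for which $\|S^{-1}x\|_{\ell^\infty_A}$ is as large as possible and reads off that this equals (up to $|\det S|^{-1}$) the quantity $\sup_k|\widehat{B^n}(k)|$ restricted appropriately — here is where one substitutes $\|\widehat{B^n}\|_{\ell^\infty}\asymp n^{-1/N}$, but since $S^{-1}$ scales like $(\det S)^{-1}\times$(bounded operator in the $\ell^1_A\!-\!\ell^1_A$ sense), we actually get the reciprocal: $\|S^{-1}\|\gtrsim |\det S|^{-1} \|\widehat{B^n}\|_{\ell^\infty}^{-1}\cdot c$? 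That would give $n^{1/N}$, which is what we want — so the relevant test should pair the growth of $B^{-n}$'s coefficients, and the identity $\|\widehat{B^{-n}}\cdot(\det S)\|_{\ell^\infty}$-type expression is bounded below using that the Fourier coefficients of $B^n$ cannot all be too large, i.e. the \emph{upper} bound in Theorem~\ref{thm:upper_bd}. I expect the main obstacle to be precisely this sign/direction bookkeeping: making rigorous that the \emph{upper} estimate $\|\widehat{B^n}\|_{\ell^\infty}\lesssim n^{-1/N}$ translates into a \emph{lower} estimate on $\|S^{-1}\|$, via the duality between $\ell^\infty_A$ and $\ell^1_A$, together with correctly tracking all the geometric constants $\prod_j|\lambda_j|^n$ so that they cancel against $|\det S|$ in the final product. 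Once that is pinned down, part (ii) and Remark~\ref{rem9}'s alternative proof are short.
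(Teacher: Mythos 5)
Your architecture matches the paper's: work with the norm $\|\cdot\|_{\ell^\infty_A}$ so that $\|S\|\le 1$, prove the lower bound (i) for $|\det S|\cdot\|S^{-1}\|_{\ell^\infty_A\to\ell^\infty_A}$, and then get (ii) from the Gluskin--Meyer--Pajor expression of $\Phi$ as a supremum of $|\det T|\cdot\|T^{-1}\|$ over contractions with the prescribed spectrum (only the inequality $\Phi\ge|\det S|\cdot\|S^{-1}\|$ is available and needed; your claimed two-sided equivalence $\Phi\asymp|\det S|\cdot\|S^{-1}\|$ for the specific operator $S$ is an overstatement, though harmless for this direction). The genuine gap is in the technical heart of (i), which you leave as an unresolved ``bookkeeping'' issue. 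The paper resolves it with a concrete test vector: the last Malmquist--Walsh basis element $e_{nm}=(1-|\lambda_m|^2)^{1/2}B^n/(z-\lambda_m)\in K_B$. Two quantitative facts about it are needed, and neither appears in your proposal. First, $\|\widehat{e_{nm}}\|_{\ell^\infty}\lesssim n^{-1/N}$; this does \emph{not} follow by merely quoting Theorem~\ref{thm:upper_bd}, because the relevant integrand is $e^{{\rm i}nf(\theta)}/(e^{{\rm i}\theta}-\lambda_m)$, i.e.\ $B^n$ multiplied by a non-constant weight. One must rerun the van der Corput analysis with the weight $G(\theta)=\Re\bigl(1/(e^{{\rm i}\theta}-\lambda_m)\bigr)$ (and its imaginary counterpart), splitting the intervals into finitely many pieces where $G$ is monotone and of constant sign so that Lemma~\ref{lem:VDCP} applies. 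Second, writing $g=S^{-1}e_{nm}=g(0)+ze_{nm}$ and using that every element of $K_B$ is $P(z)/\prod_j(1-\bar\lambda_jz)^n$ with $\deg P\le nm-1$, one computes exactly $g(0)=-(1-|\lambda_m|^2)^{1/2}\prod_{j=1}^m(-\bar\lambda_j)^{-n}$; this is the step that produces the factor $\prod_j|\lambda_j|^{-n}$ which cancels $|\det S|=\prod_j|\lambda_j|^n$ from \eqref{det1}. With these two facts, \eqref{etoile} gives $\|S^{-1}\|\ge|g(0)|/\|\widehat{e_{nm}}\|_{\ell^\infty}\gtrsim n^{1/N}\prod_j|\lambda_j|^{-n}$ and (i) follows; without them, your argument is a plan rather than a proof, and the vague identification of $S^{-1}$ with ``convolution against the coefficients of $B^{-n}$'' is not a correct substitute (note in particular that $B^n\notin K_B$, so $B^n$ itself cannot serve as the test vector).

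For (ii), besides the route you propose (which is the paper's: plug $T=S$ into the GMP supremum), the paper's Remark~\ref{rem9} gives the alternative you gesture at: by the Wiener-algebra duality of \cite{SZ2}, any admissible $h=B^ng$ with $h(0)=\prod_j\lambda_j^n$ satisfies $\|h\|_W\cdot\|\widehat{B^n}\|_{\ell^\infty}\ge|\langle h,B^n\rangle|=1$, so the upper bound of Theorem~\ref{thm:upper_bd} immediately yields $\Phi\gtrsim n^{1/N}$ without any operator-theoretic computation. If you want to avoid the weighted stationary-phase estimate for $e_{nm}$, this duality argument is the cleaner way to make rigorous your intuition that the \emph{upper} bound on $\|\widehat{B^n}\|_{\ell^\infty}$ converts into the desired \emph{lower} bound; but it proves (ii) only, whereas (i) still requires the test-vector computation above.
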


Here $N$ is the integer associated with the Blaschke product $B= \prod_{j=1}^m b_{\lambda_j}$ in Theorem~\ref{thm:upper_bd}.

\section{\label{sec:Proof-Theorem1}Proof of Theorem \ref{thm:upper_bd} }

The proof of Theorem~\ref{thm:upper_bd} makes use of the following
van der Corput lemmata, see, for example, \cite[Lemma 2.1 and Lemma 2.2, p. 56]{Ivi}. 

\begin{lem}\label{VDCP_order_1} Let $g$ be a real function continuously
differentiable on the interval $[a,b]\subset\mathbb{R}$ such that
$g$ and $g'$ are monotone and $g'$ does not vanish on $[a,b].$
Then 
\[
\Abs{\int_{a}^{b}e^{{\rm i}g(t)}\,dt}\le\frac{2}{\abs{g'(a)}}+\frac{2}{\abs{g'(b)}}.
\]
\end{lem}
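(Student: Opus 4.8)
The plan is to integrate by parts once, exploiting the identity $e^{{\rm i}g(t)}=\frac{1}{{\rm i}g'(t)}\frac{d}{dt}e^{{\rm i}g(t)}$, which makes sense precisely because $g'$ does not vanish. First I would note that replacing $g$ by $-g$ conjugates $\int_a^b e^{{\rm i}g(t)}\,dt$ and alters neither side of the claimed inequality, so there is no loss in assuming $g'>0$ on $[a,b]$; then $1/g'$ is a positive, monotone (hence bounded variation) function on $[a,b]$. Writing the integral as a Riemann--Stieltjes integral and integrating by parts gives
\[
\int_a^b e^{{\rm i}g(t)}\,dt=\int_a^b\frac{1}{{\rm i}g'(t)}\,d\bigl(e^{{\rm i}g(t)}\bigr)=\left[\frac{e^{{\rm i}g(t)}}{{\rm i}g'(t)}\right]_a^b-\int_a^b e^{{\rm i}g(t)}\,d\!\left(\frac{1}{{\rm i}g'(t)}\right),
\]
which is legitimate since $e^{{\rm i}g}$ is continuous and $1/g'$ has bounded variation (when $g\in C^2$ this is plain integration by parts; in general it is the standard Riemann--Stieltjes version).

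Next I would estimate the two resulting terms separately. The boundary term is at most $\abs{g'(a)}^{-1}+\abs{g'(b)}^{-1}$ because $\abs{e^{{\rm i}g}}=1$. The remaining integral is bounded in modulus by the total variation of $1/g'$ over $[a,b]$, which, $g'$ being monotone, equals $\bigl|1/g'(b)-1/g'(a)\bigr|$; and since $1/g'(a)$ and $1/g'(b)$ share the sign of $g'$, this is again at most $\abs{g'(a)}^{-1}+\abs{g'(b)}^{-1}$. Adding the two estimates produces $2\abs{g'(a)}^{-1}+2\abs{g'(b)}^{-1}$, which is exactly the asserted bound.

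The only delicate point, and the step I would take the most care over, is the justification of the integration by parts given that $g'$ is merely monotone rather than differentiable; this is precisely what the Riemann--Stieltjes framework handles, using the bounded variation of $1/g'$. One could instead apply the second mean value theorem for integrals to the real and imaginary parts of $\int_a^b e^{{\rm i}g}$ together with the primitives $\sin g$, $\cos g$, but that route loses a factor of $2$ and only yields $4\abs{g'(a)}^{-1}+4\abs{g'(b)}^{-1}$, so the integration-by-parts argument applied directly to the complex integral is the one that delivers the constant stated in the lemma.
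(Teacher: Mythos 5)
The paper does not prove this lemma at all: it is quoted verbatim from the literature (Ivi\'c, Lemma 2.1), so there is no internal proof to compare with, and the relevant question is only whether your argument stands on its own. It does. The reduction to $g'>0$ is legitimate (continuity and nonvanishing of $g'$ give it constant sign, and $g\mapsto -g$ conjugates the integral), the Riemann--Stieltjes identity $\int_a^b e^{{\rm i}g}\,dt=\int_a^b \frac{1}{{\rm i}g'}\,d\bigl(e^{{\rm i}g}\bigr)$ is valid because $e^{{\rm i}g}$ is $C^1$ and $1/g'$ is continuous, the integration by parts is justified since $1/({\rm i}g')$ is monotone (hence of bounded variation) and $e^{{\rm i}g}$ is continuous, and the two estimates you give — boundary term at most $|g'(a)|^{-1}+|g'(b)|^{-1}$, and $\bigl|\int e^{{\rm i}g}\,d(1/({\rm i}g'))\bigr|\le V_a^b(1/g')=|1/g'(b)-1/g'(a)|$ — are correct; in fact, since $x+y+|x-y|=2\max(x,y)$, your argument even yields the slightly sharper bound $2/\min(|g'(a)|,|g'(b)|)$, which implies the stated one. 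Two minor remarks: the hypothesis that $g$ itself is monotone is never needed (it is automatic from $g'$ being continuous and nonvanishing), and your closing aside is a fair description of the trade-off with the classical route via the second mean value theorem applied separately to $\cos g$ and $\sin g$, which handles the real and imaginary parts at the cost of a worse constant; the complex integration-by-parts argument you chose is the cleaner way to reach exactly the constant in the statement.
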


\begin{lem}
\label{lem:VDCP} Let $F(x)$ be a real, twice differentiable function
in $[a,b]$ such that $F''(x)\ge\mu>0$ or $F''(x)\le-\mu<0$. Let
$G(x)$ be a positive monotonic function in $[a,b]$ such that $G(x)\le M$.
Then 
\[
\Abs{\int_{a}^{b}G(x)e^{{\rm i}F(x)}\,dx}\le\frac{8M}{\sqrt{\mu}}.
\]
\end{lem}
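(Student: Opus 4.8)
The plan is to derive the bound in two steps. First I would treat the case of a constant amplitude, $G\equiv1$, establishing the estimate on \emph{every} subinterval of $[a,b]$ by a splitting argument together with Lemma~\ref{VDCP_order_1}; then I would restore the monotone weight $G$ via the second mean value theorem for integrals.

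\emph{Step 1: constant amplitude.} Replacing $F$ by $-F$ and taking complex conjugates if necessary (note $(-F)''=-F''$), one may assume $F''\ge\mu>0$, so that $F'$ is strictly increasing; it vanishes at most once on $[a,b]$, and we first treat the case that it does, say at $c$. Set $\delta=2\mu^{-1/2}$ and let $[\alpha,\beta]\subseteq[a,b]$ be arbitrary. Split $[\alpha,\beta]$ into its intersection with $(c-\delta,c+\delta)$ and the (at most two) complementary pieces. On the middle piece I would bound $\Abs{\int e^{{\rm i}F}}$ trivially by its length, which is $\le2\delta$. On a complementary piece, $F'$ is monotone and of constant sign with $\Abs{F'}\ge\mu\delta$ there, because $F'(c)=0$ forces $\Abs{F'(c\pm\delta)}=\Abs{\int_c^{c\pm\delta}F''}\ge\mu\delta$ and $F'$ is monotone; hence Lemma~\ref{VDCP_order_1} applies and yields $\Abs{\int e^{{\rm i}F}}\le 2/(\mu\delta)+2/(\mu\delta)=4/(\mu\delta)$ on that piece. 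Altogether $\Abs{\int_\alpha^\beta e^{{\rm i}F(x)}\,dx}\le 2\delta+8/(\mu\delta)=8\mu^{-1/2}$, valid for every $[\alpha,\beta]\subseteq[a,b]$ with this one choice of $\delta$. (If $F'$ does not vanish on $[\alpha,\beta]$, one splits off a single endpoint piece of length $\delta$ instead and obtains the smaller bound $\delta+4/(\mu\delta)$.)

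\emph{Step 2: monotone amplitude.} Since $G$ is positive and monotone on $[a,b]$, Bonnet's form of the second mean value theorem provides $\xi\in[a,b]$ with $\int_a^bG(x)e^{{\rm i}F(x)}\,dx=G(a)\int_a^{\xi}e^{{\rm i}F}$ when $G$ is non-increasing, and $\int_a^bG(x)e^{{\rm i}F(x)}\,dx=G(b)\int_{\xi}^{b}e^{{\rm i}F}$ when $G$ is non-decreasing. In either case $0<G\le M$ together with Step 1 gives $\Abs{\int_a^bG(x)e^{{\rm i}F(x)}\,dx}\le M\cdot8\mu^{-1/2}=8M/\sqrt\mu$, which is the assertion. (Equivalently, one may write $G$ as a superposition $\int\mathbf{1}_{[\alpha_t,\beta_t]}\,dt$ of indicators of subintervals of $[a,b]$ of total mass $\le M$ and integrate the Step 1 estimate.)

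The argument is essentially routine; the points that require some care are the case distinction (whether or not $F'$ vanishes), which must be handled uniformly over all subintervals $[\alpha,\beta]$ with a single choice of $\delta$, and the tracking of constants — choosing $\delta=2\mu^{-1/2}$ and using the one-term Bonnet identity rather than the two-term Weierstrass form of the second mean value theorem is precisely what keeps the final constant equal to $8$.
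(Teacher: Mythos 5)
The paper itself contains no proof of this lemma: it is quoted from Ivi\'c \cite[Lemma 2.2]{Ivi}, so your attempt should be measured against the classical argument, whose two-step structure (second-derivative test via splitting around the stationary point plus the first-derivative test, then removal of the monotone amplitude) you follow. Your Step 1 is correct and complete: since $F$ is twice differentiable, $F'$ is continuous and increasing (after the reduction to $F''\ge\mu$), so on the at most two pieces of $[\alpha,\beta]$ outside $(c-\delta,c+\delta)$ both $F$ and $F'$ are monotone with $|F'|\ge\mu\delta$, Lemma~\ref{VDCP_order_1} gives at most $4/(\mu\delta)$ per piece, and with the single choice $\delta=2\mu^{-1/2}$ you get the uniform bound $2\delta+8/(\mu\delta)=8\mu^{-1/2}$ for \emph{every} subinterval, the non-vanishing case being easier still.

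The gap is in the main line of Step 2. Bonnet's second mean value theorem is a statement about \emph{real} integrands (it rests on the intermediate value theorem applied to the primitive), and the identity $\int_a^b G(x)e^{{\rm i}F(x)}\,dx=G(a)\int_a^{\xi}e^{{\rm i}F(x)}\,dx$ is false in general for a complex-valued oscillating factor: for instance, with $G(x)=1-x/(4\pi)$ (positive, non-increasing) and $F(x)=x$ on $[0,2\pi]$ the left-hand side equals ${\rm i}/2$, while $G(0)\int_0^{\xi}e^{{\rm i}x}\,dx=\sin\xi+{\rm i}(1-\cos\xi)$ never takes that value; the real and imaginary parts would require different mean points $\xi$. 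The routine repair --- applying Bonnet separately to $\int G\cos F$ and $\int G\sin F$ and bounding each by $M\cdot 8\mu^{-1/2}$ via Step 1 --- only yields $16M/\sqrt{\mu}$, so it does not give the stated constant (although for the application in Section 4 any absolute constant would do). What does work, and gives exactly $8M/\sqrt{\mu}$, is the argument you relegate to a parenthesis: by monotonicity each level set $\{G>t\}$ is a subinterval of $[a,b]$, so $G=\int_0^{M}\mathbf{1}_{\{G>t\}}\,dt$, and Fubini plus the triangle inequality applied to the uniform Step 1 bound finishes the proof. That superposition argument should be promoted to the main text of Step 2, and the claim that it is ``equivalent'' to the complex Bonnet identity should be dropped, since the latter is not a valid statement.
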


\begin{proof}[Proof of Theorem \ref{thm:upper_bd}.]


For $j=1,\ldots,m$ we write $\lambda_j=\rho_{j}\exp({\rm i}\theta_j)$, where $\rho_{j}\in(0,1)$
and $\theta_{j}\in(-\pi,\pi]$. A direct computation shows that 
\[
\psi'_{B}(\theta)=|B'(e^{{\rm i}\theta})|=\Bigl|\frac{B'(e^{{\rm i}\theta})}{B(e^{{\rm i}\theta})}\Bigr|=\biggl|\sum_{1\le j\le m}\frac{b'_{\lambda_j}
(e^{{\rm i}\theta})}{b_{\lambda_j}(e^{{\rm i}\theta})}\biggr|.
\]
Since 
\[
\frac{zb'_{\lambda}(z)}{b_{\lambda}(z)}=\Re\frac{1+\overline{\lambda}z}{1-\overline{\lambda}z}\ge0,\qquad\lambda\in\mathbb{D},\,z\in\mathbb{T},
\]
we have 
\begin{equation}
\psi'_{B}(\theta)=\Re\sum_{1\le j\le m}\frac{1+\overline{\lambda_{j}}e^{{\rm i}\theta}}{1-\overline{\lambda_j}e^{{\rm i}\theta}}.\label{eq1}
\end{equation}

Therefore, 
\[
\psi_{B}'(\theta)=\sum_{j=1}^{m}\frac{1-\rho_{j}^{2}}{1+\rho_{j}^{2}-2\rho_{j}\cos\left(\theta-\theta_{j}\right)}.
\]
The $k^{th}$-Fourier coefficient of $B^{n}$ is given by 
\[
\widehat{B^{n}}(k)=\frac{1}{2\pi}\int_{0}^{2\pi}\exp\left({\rm i}\left(n\psi_{B}(\theta)-k\theta\right)\right)\,d\theta.
\]
We put 
\[
f(\theta)=\psi_{B}(\theta)-\frac{k}{n}\theta, 
\]
and given $\epsilon>0$ we write (recall that $\xi_{1}>0$) 
\begin{multline*}
\int_{0}^{2\pi}\exp\left({\rm i}n\, f(\theta)\right){\rm d}\theta  =\sum_{\ell=1}^{s}\int_{\xi_{\ell}-\epsilon}^{\xi_{\ell}+\epsilon}\exp\left({\rm i}n\,f(\theta)\right)\,d\theta
  \\+\sum_{\ell=1}^{s-1}\int_{\xi_{\ell}+\epsilon}^{\xi_{\ell+1}-\epsilon}\exp\left({\rm i}n\,f(\theta)\right)\,d\theta
  +\int_{0}^{\xi_{1}-\epsilon}\exp\left({\rm i}n\,f(\theta)\right)\,d\theta\\
  +\int_{\xi_{s}+\epsilon}^{2\pi}\exp\left({\rm i}n\,f(\theta)\right)\,d\theta.
\end{multline*}
We put 
\[
\epsilon=n^{-1/N}\rightarrow0,\qquad n\rightarrow\infty.
\]
Clearly 
\[
\biggl|\int_{\xi_{\ell}-\epsilon}^{\xi_{\ell}+\epsilon}\exp\left({\rm i}n\,f(\theta)\right)\,d\theta\biggr|\le2\epsilon,
\]
and therefore the first sum is bounded from above by  
\[
\biggl|\sum_{\ell=1}^{s}\int_{\xi_{\ell}-\epsilon}^{\xi_{\ell}+\epsilon}\exp\left({\rm i}n\,f(\theta)\right)\,d\theta\biggr|\lesssim n^{-1/N}.
\]

Writing the Taylor expansion (of order $N_{\ell}-2$) of $\psi_{B}''$
near $\xi_{\ell}$ we obtain that 
\[
\psi_{B}''(\theta)=\frac{\psi_{B}^{(N_{\ell})}(\xi_{\ell})}{(N_{\ell}-2)!}\left(\theta-\xi_{\ell}\right)^{N_{\ell}-2}\left(1+\cO\left(\theta-\xi_{\ell}\right)\right),\qquad\theta\to\xi_{\ell}.
\]
Therefore, if $\theta$ is close to $\xi_{\ell}$ but satisfies $\abs{\theta-\xi_{\ell}}\ge\epsilon$,
then 
\begin{equation}
\abs{\psi_{B}''(\theta)}\gtrsim n^{-(N_{l}-2)/N}\label{eq2}.
\end{equation}

Since $\psi_{B}''$ does not vanish on the intervals $(0,\xi_{1}-\epsilon)$, 
$(\xi_{s}+\epsilon,2\pi)$ and $(\xi_{\ell}+\epsilon,\xi_{\ell+1}-\epsilon)$
for $\ell=1,\ldots,s-1$, we obtain that \eqref{eq2} holds everywhere
outside of the intervals $[\xi_{\ell}-\epsilon,\xi_{\ell}+\epsilon]$. Now
we can apply Lemma~\ref{lem:VDCP} to the remaining integrals 
\begin{equation*}
\int_{\xi_{\ell}+\epsilon}^{\xi_{\ell+1}-\epsilon}e^{{\rm i}n\,f(\theta)}\,d\theta,
\qquad
\int_{0}^{\xi_{1}-\epsilon}e^{{\rm i}n\,f(\theta)}\,d\theta,\qquad
\int_{\xi_{s}+\epsilon}^{2\pi}e^{{\rm i}n\,f(\theta)}\,d\theta.
\end{equation*}
Since 
\[
|nf''|\gtrsim n^{2/N}
\]
on these intervals, these integrals are also $\cO\left(n^{-1/N}\right)$.
Thus,
$$
\|\widehat{B^n}\|_{\ell^\infty}\lesssim n^{-1/N}.
$$

Let $r\in\{1,\ldots,s\}$ be such that $N_r=N$ and define  
$$
\mathcal L=\bigl\{\ell:1\le \ell\le s,\, N_\ell=N,\, \psi_{B}'(\xi_\ell)=\psi_{B}'(\xi_r)\bigr\},\quad D=\card \mathcal L.
$$

We will show that the maximal value of $\abs{\widehat{B^{n}}(k)}$ for $k\ge0$ 
is (asymptotically as $n$ grows large) attained at $k=k_d=k_d(n)=[n\psi_{B}'(\xi_r)]+d$, $0\le d<D$ 
(where $\left[A\right]$ means the integer part of $A$), that is,
\[
\max_{0\le d<D}|\widehat{B^{n}}(k_d(n))|\asymp n^{-1/N}.
\]
Recall that the $k_d^{th}$-Fourier coefficient of $B^{n}$ is given
by 
$$ 
\widehat{B^n}(k_d) =\frac{1}{2\pi}\int_{0}^{2\pi}\exp\left({\rm i}n\,f_d(\theta)\right)\,d\theta,
$$ 
where  
\[
f_d(\theta)=\psi_{B}(\theta)-\frac{k_d}{n}\theta.
\]
Let $(\epsilon_{\ell})_{\ell=1}^{s}$ be a sequence of nonnegative numbers.
Each $\epsilon_\ell$ for $\ell=1,\ldots, s$ will be chosen below over the
proof, depending on the nature of $\xi_{\ell}$. 
Let $\tau_0=0$, $\tau_\ell\in(\xi_\ell+\epsilon_{\ell},\xi_{\ell+1}-\epsilon_{\ell+1})$, $1\le\ell\le s-1$, $\tau_s=2\pi$, 
$J_\ell=[\xi_\ell-\epsilon_{\ell},\xi_{\ell}+\epsilon_{\ell}]$, $1\le\ell\le s$. Then 
$$
[0,2\pi)\setminus\bigsqcup_{1\le\ell\le s}J_\ell=\bigsqcup_{1\le\ell\le s}[\tau_{\ell-1},\xi_\ell-\epsilon_{\ell})\sqcup (\xi_{\ell}+\epsilon_{\ell},\tau_\ell)=:\bigsqcup_{1\le\ell\le s}(J'_\ell\sqcup J''_\ell).
$$ 

We have 
$$
\int_{0}^{2\pi}e^{{\rm i}n\,f(\theta)}\,d\theta  =\sum_{1\le\ell\le s}\int_{J_\ell}e^{{\rm i}n\,f(\theta)}\,d\theta+\sum_{1\le\ell\le s}
\int_{J'_\ell\sqcup J''_\ell}e^{{\rm i}n\,f(\theta)}\,d\theta.
$$
We divide our argument into three steps in order to show that the main
contribution of $\int_{0}^{2\pi}\exp\left({\rm i}n\,f(\theta)\right)\,d\theta$
is due to $J_\ell$, $\ell\in\mathcal L$. 

The content of Step
1 is computing an asymptotic formula for $\Sigma=\Sigma_d$, the sum of the integrals
\[
I_{\ell,d}=\int_{J_\ell}\exp\left({\rm i}n\,f_d(\theta)\right)\,d\theta,\qquad \ell\in\mathcal L,
\]
with $0\le d<D$, for suitable choices of $\epsilon_\ell$. In Step 2 we estimate from
above the integrals
\[
\int_{J_\ell}\exp\left({\rm i}n\,f(\theta)\right)\,d\theta,
\]
for $1\le \ell\le s$, $\ell\not\in\mathcal L$, and show that they are asymptotically
much smaller than $\Omega=\max_{0\le d<D}|\Sigma_d|$, again for suitable choices of $\epsilon_{\ell}$.
Finally the goal of Step 3 is to show -- in the same spirit as in
Step 2 -- that the integrals 
\[
\int_{J'_\ell}\exp\left({\rm i}n\,f(\theta)\right)\,d\theta,\qquad \int_{J''_\ell}\exp\left({\rm i}n\,f(\theta)\right)\,d\theta,
\]
are also asymptotically much smaller than $\Omega$. \\

Step 1. To make the notation less cluttered we set $\xi=\xi_\ell$, $F=F_d=nf_d$, $\epsilon=\epsilon_\ell$, so that 
\[
I=I_\ell=I_{\ell,d}=\int_{\xi-\epsilon}^{\xi+\epsilon}\exp\left({\rm i}F_d(\theta)\right)\,d\theta.
\]
Without loss of generality we may assume that 
\[
\psi_B^{(N)}(\xi)>0.
\]
In the opposite case the argument is analogous.\\

Furthermore, we fix $\delta\in(0,1/(2N^2))$ and set 
\begin{equation}
\epsilon=n^{\delta-\frac1N}.
\label{st1}
\end{equation}
This choice of $\epsilon$ is motivated below in the proofs of formulas \eqref{eq:I_N_even} and \eqref{eq:I_N_odd}. \\

We will first establish that:

(a) If $N$ is even, then 
\begin{multline}
I_{\ell,d}=\frac{2}{N}\left(\frac{N!}{n\psi_{B}^{(N)}(\xi)}\right)^{1/N}\times \\ \times \exp\left({\rm i}n\psi_{B}(\xi)-{\rm i}k_d\xi+\frac{{\rm i}\pi}{2N}\right)\Gamma(1/N)+o\left(\frac{1}{n^{1/N}}\right).\label{eq:I_N_even}
\end{multline}

(b) If $N$ is odd, then 
\begin{multline}
I_{\ell,d}=\frac{2}{N}\left(\frac{N!}{n\psi_{B}^{(N)}(\xi)}\right)^{1/N}\times \\ \times\exp\left({\rm i}n\psi_{B}(\xi)-{\rm i}k_d\xi\right)\cos\left(\frac{\pi}{2N}\right)\Gamma(1/N)+o\left(\frac{1}{n^{1/N}}\right).\label{eq:I_N_odd}
\end{multline}

Here and later on, $\Gamma$ is the Gamma function.

Writing the Taylor expansion of $\psi_{B}$ near
$\xi$ we obtain that
\begin{multline*}
\psi_{B}(\theta)=\psi_{B}(\xi)+(\theta-\xi)\psi_{B}'(\xi)\\+\frac{(\theta-\xi)^{N}}{N!}\psi_{B}^{(N)}(\xi)+\frac{(\theta-\xi)^{N+1}}{(N+1)!}\psi_{B}^{(N+1)}(\xi+t_\theta(\theta-\xi)),
\end{multline*}
for some $t_\theta\in(0,1)$.
Therefore, 
\begin{multline*}
\psi_{B}(\theta)-\frac{k}{n}\theta  =\psi_{B}(\xi)-\frac{k}{n}\xi+(\theta-\xi)\left(\psi_{B}'(\xi)-\frac{k}{n}\right)+\frac{(\theta-\xi)^{N}}{N!}\psi_{B}^{(N)}(\xi)\\
 +\frac{(\theta-\xi)^{N+1}}{(N+1)!}\psi_{B}^{(N+1)}(\xi+t_\theta(\theta-\xi)),
\end{multline*}
and
\begin{multline*}
F(\theta)=F(\xi)+(\theta-\xi)F'(\xi)+\frac{(\theta-\xi)^{N}}{N!}F^{(N)}(\xi)\\+\frac{(\theta-\xi)^{N+1}}{(N+1)!}F^{(N+1)}(\xi+t_\theta(\theta-\xi)).
\end{multline*}
Thus, going back to the integral $I$ we obtain that if $n$ 
is sufficiently large, then
\begin{multline*}
I  =\exp\left({\rm i}F(\xi)\right)\cdot\int_{\xi-\epsilon}^{\xi+\epsilon} \exp\left({\rm i}\frac{(\theta-\xi)^{N}}{N!}F^{(N)}(\xi)\right)\times \\ \times\exp\left({\rm i}(\theta-\xi)F'(\xi)+{\rm i}\frac{(\theta-\xi)^{N+1}}{(N+1)!}F^{(N+1)}(\xi+t_\theta(\theta-\xi))\right)\,d\theta.
\end{multline*}
Furthermore, 
\begin{multline*}
 \exp\left({\rm i}(\theta-\xi)F'(\xi)+{\rm i}\frac{(\theta-\xi)^{N+1}}{(N+1)!}F^{(N+1)}(\xi+t_\theta(\theta-\xi))\right)\\
 =1+\cO\left(\abs{(\theta-\xi)F'(\xi)}+\frac{|\theta-\xi|^{N+1}}{(N+1)!}\abs{F^{(N+1)}(\xi+t_\theta(\theta-\xi))}\right)\\
  =1+\cO\left(n|\theta-\xi|\left(|f'(\xi)|+|\theta-\xi|^N\right)\right),
\end{multline*}
for $\theta$ in a small neighborhood of $\xi$. This gives
\begin{multline}
\exp\left(-{\rm i}F(\xi)\right)I   
 \\=\int_{\xi-\epsilon}^{\xi+\epsilon}\exp\left({\rm i}\frac{(\theta-\xi)^{N}}{N!}F^{(N)}(\xi)\right)\,d\theta+\cO\left(n\epsilon^2
 |f'(\xi)|+n\epsilon^{N+2}\right).\label{eq:I_1st_decomp}
\end{multline}

Let us verify that with our choice of $\epsilon$ we have 
\begin{itemize}
\item[(i)] $n\epsilon^{2}\abs{f'(\xi)}=o\left(n^{-1/N}\right)$,
\item[(ii)] $n\epsilon^{N+2}=o\left(n^{-1/N}\right)$, 
\item[(iii)] $n\epsilon^{N}\rightarrow\infty$ as $n\rightarrow\infty$.
\end{itemize}
Since $k\le n\psi_{B}'(\xi)<k+D$, we have $n|f'(\xi)|\lesssim 1$. 
Now, (i)--(iii) follow from \eqref{st1}. 

Thus,
\begin{equation}
n\epsilon^2 |f'(\xi)|+n\epsilon^{N+2}=o(n^{-1/N}).\label{eq:O_term_I}
\end{equation}

Next we consider
\begin{multline*}
J =\int_{\xi-\epsilon}^{\xi+\epsilon}\exp\left({\rm i}\frac{(\theta-\xi)^{N}}{N!}F^{(N)}(\xi)\right)\,d\theta\\
 \\=\int_{\xi}^{\xi+\epsilon}\exp\left({\rm i}\frac{(\theta-\xi)^{N}}{N!}F^{(N)}(\xi)\right)\,d\theta+\int_{\xi-\epsilon}^{\xi}\exp\left({\rm i}\frac{(\theta-\xi)^{N}}{N!}F^{(N)}(\xi)\right)\,d\theta\\
 \\=J_{1}+J_{2}.
\end{multline*}
First we estimate $J_1$. Changing the variable 
\[
u=\frac{(\theta-\xi)^{N}}{N!}F^{(N)}(\xi),
\]
we obtain that 

\begin{multline*}
J_{1}  =\frac{1}{N}\left(\frac{N!}{F^{(N)}(\xi)}\right)^{1/N}\int_{0}^{\frac{\epsilon^{N}}{N!}F^{(N)}(\xi)}\frac{\exp\left({\rm i}u\right)}{u^{1-1/N}}\,du\\
 \\=\frac{1}{N}\left(\frac{N!}{F^{(N)}(\xi)}\right)^{1/N}\left(\int_{0}^{\infty}\frac{\exp\left({\rm i}u\right)}{u^{1-1/N}}\,du-\int_{\frac{\epsilon^{N}}{N!}F^{(N)}(\xi)}^{\infty}\frac{\exp\left({\rm i}u\right)}{u^{1-1/N}}\,du\right)\\
 \\ \!=\frac{1}{N}\left(\frac{N!}{F^{(N)}(\xi)}\right)^{1/N}\!\!\left(\exp\left(\frac{{\rm i}\pi}{2N}\right)\Gamma(1/N)+\cO\left(\frac{1}{(\epsilon^{N}F^{(N)}(\xi))^{1-1/N}}\right)\right).
\end{multline*}
Here we use that 
\[
\int_{0}^{\infty}\frac{\exp\left({\rm i}u\right)}{u^{1-1/N}}\,du=\exp\left(\frac{{\rm i}\pi}{2N}\right)\Gamma(1/N),
\]
and (via integration by parts, with $\gamma>0$) that  
\begin{multline*}
-\int_\gamma^{\infty}\frac{\exp\left({\rm i}u\right)}{u^{1-1/N}}\,du =\left[{\rm i}\frac{\exp\left({\rm i}u\right)}{u^{1-1/N}}\right]_\gamma^{\infty}+\left(1-\frac{1}{N}\right){\rm i}\int_{\gamma}^{\infty}\frac{\exp\left({\rm i}u\right)}{u^{2-1/N}}\,du \\
  =\cO\left(\frac{1}{\gamma^{1-1/N}}\right).
\end{multline*}
Here $\gamma=\frac{\epsilon^{N}}{N!}F^{(N)}(\xi)$ and thus $\gamma^{(N-1)/N}  \asymp n^{\delta(N-1)}$.

This gives
\begin{multline*}
J_{1} =\frac{1}{N}\left(\frac{N!}{F^{(N)}(\xi)}\right)^{1/N}\left(\exp\left(\frac{{\rm i}\pi}{2N}\right)\Gamma(1/N)+\cO\left(\frac{1}{n^{\delta(N-1)}}\right)\right)\\
 \\=\frac{1}{N}\left(\frac{N!}{n\psi_{B}^{(N)}(\xi)}\right)^{1/N}\exp\left(\frac{{\rm i}\pi}{2N}\right)\Gamma(1/N)+o\left(\frac{1}{n^{1/N}}\right).
\end{multline*}
If $N$ is even, then $J_2=J_1$, 
\[
J=\frac{2}{N}\left(\frac{N!}{n\psi_{B}^{(N)}(\xi)}\right)^{1/N}\exp\left(\frac{{\rm i}\pi}{2N}\right)\Gamma(1/N)+o\left(\frac{1}{n^{1/N}}\right),
\]
and asymptotic formula \eqref{eq:I_N_even} follows from the above
equality combined with \eqref{eq:I_1st_decomp} and \eqref{eq:O_term_I}.

For odd $N$ we obtain in an analogous way that 
$$
J_2 =\frac{1}{N}\left(\frac{N!}{n\psi_{B}^{(N)}(\xi)}\right)^{1/N}\exp\left(-\frac{{\rm i}\pi}{2N}\right)\Gamma(1/N)+o\left(\frac{1}{n^{1/N}}\right).
$$
Thus, if $N$ is odd, then 
\[
J=\frac{2}{N}\left(\frac{N!}{n\psi_{B}^{(N)}(\xi)}\right)^{1/N}\cos\left(\frac{\pi}{2N}\right)\Gamma(1/N)+o\left(\frac{1}{n^{1/N}}\right),
\]
and asymptotic formula \eqref{eq:I_N_odd} follows from the above
equality combined with \eqref{eq:I_1st_decomp} and \eqref{eq:O_term_I}.

By \eqref{eq:I_N_even} and \eqref{eq:I_N_odd} we obtain that 
$$
\Sigma=\sum_{\ell\in\mathcal L}I_{\ell,d}=\sum_{\ell\in\mathcal L}c_\ell\exp(-{\rm i}d\xi_\ell),\qquad 0\le d<D.
$$
The points $\xi_\ell$ are pairwise disjoint, and hence the square matrix \newline\noindent $\bigl(e^{-{\rm i}d\xi_\ell}\bigr)_{\ell\in\mathcal L,\,0\le d<D}$ is invertible. Since it does not depend on $n$, 
we conclude that 
$$
\Omega=\max_{0\le d<D}\Bigl | \sum_{\ell\in\mathcal L}I_{\ell,d} \Bigr| \gtrsim \sum_{\ell\in\mathcal L}|c_\ell|\asymp n^{-1/N}.
$$
\\
Step 2. Now we deal with 
the $s-D$ integrals 
\[
\int_{\xi_{\ell}-\epsilon_{\ell}}^{\xi_{\ell}+\epsilon_{\ell}}\exp\left({\rm i}n\,f(\theta)\right)\,d\theta, 
\]
$1\le \ell\le s$, $N_\ell\neq N$ or $\psi'_B(\xi_\ell)\neq \psi'_B(\xi_r)$, and choose $\epsilon_{\ell}$ in such a way that the corresponding integrals are $o\left(\frac{1}{n^{1/N}}\right)$. We distinguish
the following two cases.

(1) If $N_{\ell}=N$, then the integral 
\[
\int_{\xi_{\ell}-\epsilon_{\ell}}^{\xi_{\ell}+\epsilon_{\ell}}\exp\left({\rm i}n\,f(\theta)\right)\,d\theta
\]
is treated as follows. By continuity of $\psi_{B}'$ at the point $\xi_{\ell}$
there exists $\eta_{\ell}>0$ (independent of $n$) such that 
\[
|\theta-\xi_{\ell}| \le \eta_{\ell}\implies |\psi_{B}'(\xi_{\ell})-\psi_{B}'(\theta)|\le |\psi_{B}'(\xi_{\ell})-\psi_{B}'(\xi_r)|/2.
\]
We choose $\epsilon_{\ell}=\eta_{\ell}$ and observe that 
\[
n|f'_d(\theta)|\ge n|\psi_{B}'(\xi_{\ell})-\psi_{B}'(\xi_r)|/2-D\gtrsim n,\qquad |\theta-\xi_\ell| \le \epsilon_\ell,\, 0\le d<D.
\]
Moreover, $f=f_d$ is monotonic over the interval $[\xi_{\ell}-\epsilon_{\ell},\xi_{\ell}+\epsilon_{\ell}]$
and $f'$ is monotonic over $[\xi_{\ell}-\epsilon_{\ell},\xi_{\ell}]$ and
$[\xi_{\ell},\xi_{\ell}+\epsilon_{\ell}]$ (because $f''=\psi_{B}''$ vanishes
at $\xi_{\ell}$ and nowhere else on the interval $[\xi_{\ell}-\epsilon_{\ell},\xi_{\ell}+\epsilon_{\ell}]$).
The assumptions of Lemma \ref{VDCP_order_1} are therefore satisfied
and an application of this lemma gives:
\[
\int_{\xi_{\ell}-\epsilon_{\ell}}^{\xi_{\ell}+\epsilon_{\ell}}\exp\left({\rm i}n\,f_d(\theta)\right)\,d\theta=\cO\left(\frac{1}{n}\right).
\]

(2) If $N_{\ell}<N$, then the situation is even simpler: we set $\epsilon_{\ell}=n^{-1/N_{\ell}}$, 
and estimate directly 
\[
\biggl|\int_{\xi_{\ell}-\epsilon_\ell}^{\xi_{\ell}+\epsilon_\ell}\exp\left({\rm i}n\,f_d(\theta)\right)\,d\theta\biggr|\lesssim n^{-1/N_{\ell}}.
\]
\\
Step 3. We need to verify that 
\begin{multline*}
\biggl|\int_{J'_\ell}\exp\left({\rm i}n\,f_d(\theta)\right)\,d\theta\biggr|+\biggl|\int_{J''_\ell}\exp\left({\rm i}n\,f_d(\theta)\right)\,d\theta\biggr|\\
=o(n^{-1/N}),\quad 1\le \ell\le s,\, 0\le d<D.
\end{multline*}
Given $J'_\ell$ (or $J''_\ell$ with an analogous argument), we consider the following cases. 

(i) $\ell\in\mathcal L$. Then $\epsilon_\ell=n^{\delta-(1/N)}$, and considering the Taylor expansion (of order $N-2$) of $\psi_{B}''$
near $\xi_{\ell}$, we obtain for large $n$ and for $\theta$ close to $\xi_{\ell}$ satisfying $|\theta-\xi_\ell|\ge\epsilon_{\ell}$ that 
$$
|\psi_{B}''(\theta)|\gtrsim \epsilon_\ell^{N-2}=n^{\delta(N-2)-(N-2)/N}.
$$
Since $\psi_{B}''$ does not vanish on the
interval $J'_\ell$, we conclude that 
$$
n|f_d''(\theta)|\gtrsim n^{(2/N)+\delta(N-2)},\qquad \theta\in J'_\ell,\, 0\le d<D.
$$
An application of Lemma~\ref{lem:VDCP} gives
\[
\int_{J'_\ell}\exp\left({\rm i}n\,f_d(\theta)\right)\,d\theta=o(n^{-1/N}).
\]

(ii) $\ell\not\in\mathcal L$ and $N_\ell=N$. Since $\epsilon_\ell$ does not depend on $n$, we have 
$$
|\psi_{B}''(\theta)|\gtrsim 1,\qquad \theta\in J'_\ell,
$$
and
$$
n|f_d''(\theta)|\gtrsim n,\qquad \theta\in J'_\ell,\, 0\le d<D.
$$
Therefore, by Lemma~\ref{lem:VDCP}, we have 
\[
\int_{J'_\ell}\exp\left({\rm i}n\,f_d(\theta)\right)\,d\theta=\cO\left(n^{-1/2}\right).
\]

(iii) $N_\ell<N$. Here $\epsilon_\ell=n^{-1/N_\ell}$, and, arguing as above, we conclude that  
$$
n|f_d''(\theta)|\gtrsim n^{2/N_\ell},\qquad \theta\in J'_\ell,\, 0\le d<D,
$$
and
\[
\int_{J'_\ell}\exp\left({\rm i}n\,f_d(\theta)\right)\,d\theta=\cO\left(n^{-1/N_\ell}\right).
\]
\\
Summing up, Steps 1--3 give us that 
\[
\max_{0\le d<D}|\widehat{B^n}(k_d)|\gtrsim n^{-1/N},
\]
which completes the proof. 
\end{proof}

\section{\label{sec:Proof-Theorem2}Proof of Theorem \ref{thm:constr_examples}}

\subsection{The general case $N\protect\ge3$}

\begin{proof}[Proof of Theorem \ref{thm:constr_examples}, formula \eqref{B}.]
Let $N\ge3$ and let $B=\prod_{1\le j\le N}b_{\lambda_j}$ be a finite Blaschke product.  
The argument
$\psi_{B}(\theta)$ of $B(e^{{\rm i}\theta})$ is determined modulo $2\pi$. Furthermore, 
$\psi'_{B}$ is a real analytic $2\pi$-periodic function, 
and by \eqref{eq1}, we have 
$$
\psi'_{B}(\theta)=\Re\sum_{1\le j\le N}\frac{1+\overline{\lambda_{j}}e^{{\rm i}\theta}}{1-\overline{\lambda_{j}}e^{{\rm i}\theta}}.
$$
Since $\psi'_{B}$ is real analytic, 
the function $\psi''_{B}$ has only finite number of zeros on $[0,2\pi]$.
By Theorem~\ref{thm:upper_bd}, to obtain that 
$$
\|\widehat{B^{n}}\|_{\ell^\infty}\asymp n^{-1/N},
$$
it suffices to verify that $\psi''_{B}(0)=\ldots=\psi_{B}^{(N-1)}(0)=0$, 
$\psi_{B}^{(N)}(0)\not=0$, and $\psi''_{B}$ has no zeros of order $N-1$.

Set $u=\exp(\frac{2\pi {\rm i}}{N})$. For $t\in(0,1/(2\pi))$
to be chosen later on, we set $\zeta=t\exp(\frac{\pi {\rm i}(N-1)}{2N})$ and 
\[
\lambda_j=\frac{\overline{\zeta u^j}}{1+\overline{\zeta u^j}}\in\mathbb{D},\qquad1\le j\le N.
\]
Then 
\begin{align*}
\psi'_{B}(\theta) & =-N+2\Re\sum_{1\le j\le N}\frac{1}{1-\overline{\lambda_{j}}e^{{\rm i}\theta}}\\
 & =-N+2\Re\sum_{1\le j\le N}\frac{1+\zeta u^{j}}{1-\zeta u^{j}(e^{{\rm i}\theta}-1)}\\
 & =-N+2\Re\sum_{s\ge0}\sum_{1\le j\le N}(1+\zeta u^{j})\zeta^{s}u^{js}(e^{{\rm i}\theta}-1)^{s}\\
 & =N+2N\Re\sum_{k\ge1}\zeta^{kN}e^{{\rm i}\theta}(e^{{\rm i}\theta}-1)^{kN-1}\\
 & =N+2N\sum_{k\ge1}t^{kN}\Re\Bigl({\rm i}^{k(N-1)}e^{{\rm i}\theta}(e^{{\rm i}\theta}-1)^{kN-1}\Bigr).
\end{align*}

Furthermore, 
\begin{align*}
\psi''_{B}(\theta) & =2N\Re\sum_{k\ge1}\zeta^{kN}{\rm i}e^{{\rm i}\theta}(kNe^{{\rm i}\theta}-1)(e^{{\rm i}\theta}-1)^{kN-2}\\
 & =2N\sum_{k\ge1}t^{kN}\Re\Bigl({\rm i}^{k(N-1)+1}e^{{\rm i}\theta}(kNe^{{\rm i}\theta}-1)(e^{{\rm i}\theta}-1)^{kN-2}\Bigr).
\end{align*}
It is clear that (independently of $t$) we have $\psi''_{B}(0)=\ldots=\psi_{B}^{(N-1)}(0)=0$.
Since ${\rm i}^{2N-2}\in\mathbb{R}$, for some $c=c(t)\not=0$ we have $\psi''_{B}(\theta)\sim c\theta^{N-2}$
at $0$ and, hence, $\psi_{B}^{(N)}(0)\not=0$.

Set 
\begin{align*}
h(\theta) & ={\rm i}^{N}e^{{\rm i}\theta}(Ne^{{\rm i}\theta}-1)(e^{{\rm i}\theta}-1)^{N-2}\\
 & =(-1)^{N-1}e^{{\rm i}\theta(N+2)/2}(N-e^{-{\rm i}\theta})\cdot2^{N-2}(\sin(\theta/2))^{N-2}.
\end{align*}
Then $h(0)=0$ if and only if $\theta\in2\pi\mathbb{Z}$, and 
\[
\frac{h'(\theta)}{h(\theta)}=\frac{N+2}{2}{\rm i}+\frac{{\rm i}e^{-{\rm i}\theta}}{N-e^{-{\rm i}\theta}}+\frac{N-2}{2}\cdot\frac{\cos(\theta/2)}{\sin(\theta/2)}.
\]
Hence, 
\[
\Im\frac{h'(\theta)}{h(\theta)}\ge\frac{N+2}{2}-\frac{1}{N-1}>0,\qquad 0<|\theta|\le \pi.
\]
Thus, at every point $\theta\in]0,2\pi[$ we have 
\[
|\Re h(\theta)|+|\Re h'(\theta)|>0.
\]
Since $\Re h^{(N-2)}(0)=(-1)^{N-1}(N-1)!\not=0$, we have 
\[
\sum_{s=0}^{N-2}|\Re h^{(s)}(\theta)|>0,\qquad |\theta|\le \pi.
\]
By continuity, we can find $\delta>0$ such that 
$$
\sum_{s=0}^{N-2}|\Re h^{(s)}(\theta)|
\ge\delta,\qquad |\theta|\le \pi.
$$

Now we use that 
\begin{multline*}
\psi''_{B}(\theta)=2Nt^{N}\Re h(\theta)\\+2Nt^{2N}\sum_{k\ge2}t^{(k-2)N}\Re\Bigl({\rm i}^{N}e^{{\rm i}\theta}(kNe^{{\rm i}\theta}-1)(e^{{\rm i}\theta}-1)^{kN-2}\Bigr).
\end{multline*}
Therefore, we can fix a small positive $t$, $t<1/(2\pi)$, such that
$$
\sum_{s=2}^{N}|\psi_{B}^{(s)}(\theta)| 
\ge\delta Nt^{N},\qquad |\theta|\le \pi.
$$
Now, $\psi''_{B}$ has no zeros of
order $N-1$. 
Thus, by Theorem~\ref{thm:upper_bd}, for every $N\ge3$, we have constructed a Blaschke product
$B_{N}$ of order $N$ such that $\|\widehat{B_N^n}\|_{\ell^\infty}\asymp n^{-1/N}$. 
\end{proof}

\subsection{The case $N=5$}

In this section we give two explicit examples of finite Blaschke
product $B$ of degree 2 satisfying the estimate 
$$
\|\widehat{B^n}\|_{\ell^\infty}\asymp n^{-1/5}.
$$

In the first example, the zeros of $B$ are of the same modulus, while in the second example they are on the same diameter of the unit disk.

\subsubsection{Example 1}

Let $w\in\mathbb C\setminus\{1\}$ be such that $\Re w>0$,
\begin{equation}
\Re(w)=\Re(w^3)\not=\Re(w^5)
\label{eq-u11}
\end{equation}
and
\begin{equation}
\Re(w^{-1})\not=\Re(w^{-3}).
\label{eq-u21}
\end{equation}
Set $w_1=w$, $w_2=\overline{w}$,
$$
\lambda_j=\frac{w_j-1}{w_j+1},\qquad j=1,2,
$$
and consider 
\[
B=\prod_{1\le j\le 2}b_{\lambda_j}.
\]

For example, we can choose
$$
w=2+{\rm i}.
$$
Then
\begin{gather*}
\Re(w)=\Re(w^3)=2,\\
\Re(w^5)=-38,\quad \Re(w^{-1})=\frac{2}{5},\quad \Re(w^{-3})=\frac{2}{125}.
\end{gather*}

\begin{prop} 
We have
\[
\|\widehat{B^{n}}\|_{\ell^\infty}\asymp n^{-1/5}.
\]
\end{prop}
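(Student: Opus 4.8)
The plan is to verify directly that the Blaschke product $B=b_{\lambda_1}b_{\lambda_2}$ associated to $w_1=w=2+{\rm i}$, $w_2=\overline w$ satisfies the hypotheses of Theorem~\ref{thm:upper_bd} with $N=5$; that is, I must compute the zeros of $\psi_B''$ on $[0,2\pi)$ together with their multiplicities and check that the maximum $N_\ell$ equals $5$. The key simplification comes from the substitution $\lambda_j=\frac{w_j-1}{w_j+1}$, which gives $\overline{\lambda_j}=\frac{\overline{w_j}-1}{\overline{w_j}+1}$ and, exactly as in the computation carried out in the proof of \eqref{B}, turns $\frac{1+\overline{\lambda_j}e^{{\rm i}\theta}}{1-\overline{\lambda_j}e^{{\rm i}\theta}}$ into a geometric series in $\overline{w_j}(e^{{\rm i}\theta}-1)$. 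Concretely, writing $v=e^{{\rm i}\theta}-1$, one gets
\[
\psi_B'(\theta)=2+2\Re\sum_{s\ge 1}\bigl(w^{\,s}+\overline w^{\,s}\bigr)\,e^{{\rm i}\theta}v^{s-1}
\]
(after the usual rearrangement absorbing the constant term), so that $\psi_B'$, $\psi_B''$, etc., are $\Re$ of explicit power series in $v$ whose coefficients involve $\Re(w^s)$.

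Next I would locate the zero at $\theta=0$. Since $v=e^{{\rm i}\theta}-1=O(\theta)$, the leading term of $\psi_B''(\theta)$ near $0$ is governed by the smallest $s$ for which the relevant coefficient $\Re(w^s)$ (appropriately combined with powers of ${\rm i}$) does not cause cancellation; the two conditions \eqref{eq-u11}, namely $\Re(w)=\Re(w^3)$ and $\Re(w)\ne\Re(w^5)$, are exactly what forces the first several Taylor coefficients of $\psi_B''$ at $0$ to vanish while the coefficient of $\theta^{3}$ survives, giving $\psi_B''(0)=\psi_B^{(3)}(0)=\psi_B^{(4)}(0)=0$ and $\psi_B^{(5)}(0)\ne 0$, i.e.\ a zero of $\psi_B''$ of order $3=N-2$ at $\xi=0$. (Concretely: the degree-$2$ case produces only $s\equiv 1,3,5,\dots$ type contributions because of the $2$-fold symmetry $w\leftrightarrow\overline w$, the $s=1$ and $s=3$ contributions to the low-order part cancel precisely under $\Re(w)=\Re(w^3)$, and the surviving $s=5$ term is nonzero precisely under $\Re(w^5)\ne\Re(w)$.) Then I must check that $\psi_B''$ has \emph{no} zero of order $\ge 3$ elsewhere on $(0,2\pi)$, which is where the second condition \eqref{eq-u21}, $\Re(w^{-1})\ne\Re(w^{-3})$, enters: after factoring out the zero at $\theta=0$ one is left with a trigonometric expression that, as in the proof of \eqref{B}, can be written as $\Re$ of an explicit function $h(\theta)$ whose argument has a controlled imaginary part, so that $|\Re h(\theta)|+|\Re h'(\theta)|>0$ for $\theta\in(0,2\pi)$, ruling out any further zero of order $\ge 2$; the inequality $\Re(w^{-1})\ne\Re(w^{-3})$ is what guarantees the nonvanishing of the appropriate leading coefficient of this $h$. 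Finally, since $\psi_B'$ is real analytic and $2\pi$-periodic, $\psi_B''$ has only finitely many zeros, so $N=\max_\ell N_\ell=5$, and Theorem~\ref{thm:upper_bd} yields $\|\widehat{B^n}\|_{\ell^\infty}\asymp n^{-1/5}$.

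The main obstacle I expect is bookkeeping in the expansion of $\psi_B''$ near $\theta=0$: one has to track how the powers of ${\rm i}$ coming from $v^{s-1}=(e^{{\rm i}\theta}-1)^{s-1}\sim ({\rm i}\theta)^{s-1}$ interact with the real parts $\Re(w^s)$, and confirm that the precise vanishing pattern $\psi_B''(0)=\psi_B^{(3)}(0)=\psi_B^{(4)}(0)=0$, $\psi_B^{(5)}(0)\ne0$ really is equivalent to the two numerical conditions \eqref{eq-u11}. A secondary technical point is making the "$h(\theta)$" argument rigorous uniformly in $\theta$ away from $0$ — i.e.\ producing the uniform lower bound $\sum_{s=2}^{5}|\psi_B^{(s)}(\theta)|\ge\delta>0$ for $|\theta|\le\pi$ via a continuity/compactness argument exactly parallel to the one used in the general-case proof, with the small perturbation coming from the higher-order ($s\ge 2$ in the new indexing) terms in the power series; choosing $w=2+{\rm i}$ (so that $|\lambda_j|$ is not too close to $1$) keeps that perturbation manageable. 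Everything else is routine in light of the already-proven Theorem~\ref{thm:upper_bd} and the computational template established in the proof of \eqref{B}.
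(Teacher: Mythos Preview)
Your high-level plan---verify the hypotheses of Theorem~\ref{thm:upper_bd} directly---is right, and your reading of \eqref{eq-u11} as the condition forcing a zero of $\psi_B''$ of order exactly $3$ at $\theta=0$ is correct. The gap is in the ``away from $\theta=0$'' part.

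First, you misread the role of \eqref{eq-u21}. Since $\lambda_2=\overline{\lambda_1}$, the function $\psi_B'$ is even, hence $\psi_B''$ is odd and $2\pi$-periodic; this forces $\psi_B''(\pi)=0$ in addition to $\psi_B''(0)=0$. The condition $\Re(w^{-1})\ne\Re(w^{-3})$ is precisely what gives $\psi_B'''(\pi)\ne0$, so that this unavoidable second zero has order $1$. It is not a statement about a ``leading coefficient of $h$'' in the sense of the general-case proof, and your outline never addresses $\theta=\pi$ at all.

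Second, the template from the proof of \eqref{B} does not transfer. There one had $\psi_B''(\theta)=2Nt^N\Re h(\theta)+O(t^{2N})$ and chose $t$ small so that the tail was a genuine perturbation; the inequality $\Im(h'/h)>0$ controlled the zeros of the \emph{leading} term, and smallness of $t$ pushed this through to $\psi_B''$. Here $w=2+{\rm i}$ is fixed and there is no small parameter to isolate a dominant term, so the compactness argument you sketch for $\sum_{s=2}^{5}|\psi_B^{(s)}(\theta)|\ge\delta$ is circular: you would need exactly the nonvanishing you are trying to prove.

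The paper avoids all of this with one substitution that makes the problem algebraic: set $e^{{\rm i}\theta}=\dfrac{1-{\rm i}x}{1+{\rm i}x}$ and compute
\[
\psi_B''(\theta)=x(1+x^2)\,h_1(x^2),\qquad h_1(y)=\sum_{j=1}^{2}\frac{w_j(w_j^2-1)}{(1+w_j^2 y)^2}.
\]
Then \eqref{eq-u11} gives $h_1(0)=0$, $h_1'(0)\ne0$ (hence the order-$3$ zero at $\theta=0$), and---this is the step your plan is missing---the numerator of $h_1$ is a polynomial of degree at most $2$, so $h_1$ cannot have a triple zero anywhere. That single degree count rules out high-order zeros of $\psi_B''$ at every $\theta\ne0,\pi$ simultaneously. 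The point $\theta=\pi$ corresponds to $x\to\infty$ and is handled separately by \eqref{eq-u21} as above.
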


\begin{proof}
As above, the argument $\psi_{B}(\theta)$ of $B(e^{{\rm i}\theta})$ (determined modulo $2\pi$)
satisfies the relation
\[
\psi'_{B}(\theta)=\Re\sum_{1\le j\le 2}\frac{1+\lambda_je^{{\rm i}\theta}}{1-\lambda_je^{{\rm i}\theta}}.
\]
Furthermore,
\begin{align*}
\psi''_{B}(\theta)&=2\Re\sum_{1\le j\le 2}\frac{{\rm i}\lambda_je^{{\rm i}\theta}}{(1-\lambda_je^{{\rm i}\theta})^2},\\
\psi'''_{B}(\theta)&=-2\Re\sum_{1\le j\le 2}\frac{\lambda_je^{{\rm i}\theta}+\lambda^2_je^{2{\rm i}\theta}}{(1-\lambda_je^{{\rm i}\theta})^3}.
\end{align*}
Set
$$
e^{{\rm i}\theta}=\frac{1-{\rm i}x}{1+{\rm i}x},\qquad x\in\mathbb R,
$$
and define $h(x)=\psi''_{B}(\theta)$. Then $h$ has a zero of order $N$ at $x_0\in\mathbb R$ if and only if $\psi''_{B}$ has a zero of order $N$ at $\theta_0$, $e^{{\rm i}\theta_0}=(1-{\rm i}x_0)/(1+{\rm i}x_0)$. 

By Theorem~\ref{thm:upper_bd}, we need to verify that
$$
h(0)=h'(0)=h''(0)=0,\quad h'''(0)\not=0,
$$
$h$ has no zeros of order $3$ except at the origin, and $\psi'''_{B}(\pi)\not=0$. 

First,
\begin{multline*}
h(x)=2\Re\sum_{1\le j\le 2}\frac{{\rm i}\frac{w_j-1}{w_j+1}\frac{1-{\rm i}x}{1+{\rm i}x}}{(1-\frac{w_j-1}{w_j+1}\frac{1-{\rm i}x}{1+{\rm i}x})^2}=
\frac{1+x^2}2\Re\sum_{1\le j\le 2}\frac{{\rm i}(w^2_j-1)}{(1+{\rm i}w_jx)^2}\\=
\frac{1+x^2}4 \Bigl(\frac{{\rm i}(w^2-1)}{(1+{\rm i}wx)^2}+\frac{{\rm i}(\overline{w^2}-1)}{(1+{\rm i}\overline{w}x)^2}-\frac{{\rm i}(\overline{w^2}-1)}{(1-{\rm i}\overline{w}x)^2}
-\frac{{\rm i}(w^2-1)}{(1-{\rm i}wx)^2} \Bigr)\\=
x(1+x^2)\sum_{1\le j\le 2} \frac{w_j(w^2_j-1)}{(1+w_j^2x^2)^2}.
\end{multline*}
Therefore, $h(0)=0$. Denote
$$
h_1(x)=\sum_{1\le j\le 2} \frac{w_j(w^2_j-1)}{(1+w_j^2x)^2}.
$$
Then $h(x)=x(1+x^2)h_1(x^2)$. We need to verify that
$$
h_1(0)=0,\quad h_1'(0)\not=0.
$$
By \eqref{eq-u11},
$$
h_1(0)=\sum_{1\le j\le 2} (w^3_j-w_j)=0,
$$
and 
$$
h'_1(0)=-2\sum_{1\le j\le 2} (w^5_j-w^3_j)\not=0.
$$

If the function $h$ has a zero of order $3$ at $x_0\not=0$, then $h_1$ has a zero of order $3$ at $y_0=\sqrt{|x_0|}\not=0$. 
We have
$$
h_1(x)= \frac{Q(x)}{\prod_{1\le j\le 2}(1+w_j^2x)^2},
$$
where $Q$ is a polynomial of degree at most $2$, and, hence, $Q$ cannot have a zero of order $3$ at $y_0$.

Finally, by \eqref{eq-u21}, we have 
\begin{multline*}
\psi'''_{B}(\pi)=2\Re\sum_{1\le j\le 2}\frac{\lambda_j-\lambda^2_j}{(1+\lambda_j)^3}=2\Re\sum_{1\le j\le 2}\frac{\frac{w_j-1}{w_j+1}(1-\frac{w_j-1}{w_j+1})}{(1+\frac{w_j-1}{w_j+1})^3}\\=
\frac12\Re\sum_{1\le j\le 2}\frac{w^2_j-1}{w^3_j}=
\Re\bigl(w^{-1}-w^{-3}\bigr)\not=0.
\end{multline*}
\end{proof}

\subsubsection{Example 2}

Here we give another example of a finite Blaschke product
$B$ of degree 2 such that $\|\widehat{B^{n}}\|_{\ell^\infty}\asymp n^{-1/5}$.

Let $w_1,w_2\in(0,\infty)\setminus\{1\}$ be such that 
\begin{equation}
w_1+w_2=w^3_1+w^3_2\not=w^5_1+w^5_2
\label{eq-u12}
\end{equation}
and
\begin{equation}
w^{-1}_1+w^{-1}_2\not=w^{-3}_1+w^{-3}_2.
\label{eq-u22}
\end{equation}
Set  
$$
\lambda_j=\frac{w_j-1}{w_j+1},\qquad j=1,2,
$$
and consider 
\[
B=\prod_{1\le j\le 2}b_{\lambda_j}.
\]

For example, we can choose
$$
w_1=\frac12,\quad w_2=\frac{1+\sqrt{13}}4.
$$
Then
\begin{gather*}
w_1+w_2=w^3_1+w^3_2=\frac{3+\sqrt{13}}4,\quad w^5_1+w^5_2=\frac{63+19\sqrt{13}}{64},\\
 w^{-1}_1+w^{-1}_2=\frac{5+\sqrt{13}}3,\quad w^{-3}_1+w^{-3}_2=\frac{176+16\sqrt{13}}{27}.
\end{gather*}

\begin{prop} 
We have
$$
\|\widehat{B^n}\|_{\ell^\infty}\asymp n^{-1/5}.
$$
\end{prop}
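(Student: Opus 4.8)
The plan is to run the argument of the preceding proposition (Example~1), now using that $\lambda_1,\lambda_2\in(-1,1)$ are real. By Theorem~\ref{thm:upper_bd} it suffices to exhibit a point of $[0,2\pi)$ where $\psi_B''$ vanishes to order exactly $3$, and to check that $\psi_B''$ has no zero of order $\ge 3$ anywhere else; this forces the integer $N$ of Theorem~\ref{thm:upper_bd} to equal $5$. Because the $\lambda_j$ are real, the summands $\frac{{\rm i}\lambda_je^{{\rm i}\theta}}{(1-\lambda_je^{{\rm i}\theta})^2}$ are purely imaginary at $e^{{\rm i}\theta}=\pm1$, so $\psi_B''(0)=\psi_B''(\pi)=0$ automatically; the candidate degenerate critical point is $\theta=0$, while at $\theta=\pi$ we expect only a simple zero.

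First I would record, as in Example~1,
\[
\psi_B''(\theta)=2\Re\sum_{1\le j\le 2}\frac{{\rm i}\lambda_je^{{\rm i}\theta}}{(1-\lambda_je^{{\rm i}\theta})^2},\qquad
\psi_B'''(\theta)=-2\Re\sum_{1\le j\le 2}\frac{\lambda_je^{{\rm i}\theta}+\lambda_j^2e^{2{\rm i}\theta}}{(1-\lambda_je^{{\rm i}\theta})^3},
\]
substitute $e^{{\rm i}\theta}=(1-{\rm i}x)/(1+{\rm i}x)$ with $x\in\mathbb R$ (a local diffeomorphism onto the circle minus $\{-1\}$, hence order-preserving), and set $h(x)=\psi_B''(\theta)$. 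Using $1-\lambda_je^{{\rm i}\theta}=2(1+{\rm i}w_jx)/((w_j+1)(1+{\rm i}x))$ together with the reality of the $w_j$, the computation collapses exactly as in Example~1 to
\[
h(x)=x(1+x^2)\,h_1(x^2),\qquad h_1(y)=\sum_{1\le j\le 2}\frac{w_j(w_j^2-1)}{(1+w_j^2y)^2}.
\]

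Then I would verify the hypotheses of Theorem~\ref{thm:upper_bd}. By \eqref{eq-u12}, $h_1(0)=\sum_j(w_j^3-w_j)=0$ and $h_1'(0)=-2\sum_j(w_j^5-w_j^3)\ne 0$, so $h(x)=h_1'(0)x^3+\cO(x^5)$ near $0$; thus $\psi_B''(0)=\psi_B'''(0)=\psi_B^{(4)}(0)=0$ and $\psi_B^{(5)}(0)\ne 0$, i.e.\ $N_\ell=5$ at $\theta=0$. To rule out other high-order zeros, write $h_1(y)=Q(y)/\prod_j(1+w_j^2y)^2$ with $\deg Q\le 2$: a zero of order $\ge 3$ of $h$ at some $x_0\ne 0$ would force the even polynomial $Q(x^2)$, of degree $\le 4$, to vanish to order $\ge 3$ at both $x_0$ and $-x_0$, hence to order $\ge 6$, impossible unless $Q\equiv 0$ — excluded since $h_1'(0)\ne 0$. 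Finally, with $1+\lambda_j=2w_j/(w_j+1)$ and $1-\lambda_j=2/(w_j+1)$,
\[
\psi_B'''(\pi)=2\Re\sum_{1\le j\le 2}\frac{\lambda_j-\lambda_j^2}{(1+\lambda_j)^3}=\frac12\sum_{1\le j\le 2}\frac{w_j^2-1}{w_j^3}=\frac12\bigl((w_1^{-1}+w_2^{-1})-(w_1^{-3}+w_2^{-3})\bigr)\ne 0
\]
by \eqref{eq-u22}, so $\psi_B''$ has a simple zero at $\pi$ ($N_\ell=3$). All other zeros being of order $\le 2$, we get $N=\max_\ell N_\ell=5$, and Theorem~\ref{thm:upper_bd} gives $\|\widehat{B^n}\|_{\ell^\infty}\asymp n^{-1/5}$.

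The calculations are routine; the only delicate point — and hence the main obstacle — is correctly tracking the order of vanishing across the substitution $y=x^2$: near $\theta=0$ the prefactor $x(1+x^2)$ adds one extra unit, so an order-$1$ zero of $h_1$ at $0$ yields an order-$3$ zero of $h$ (giving $N=5$, not $3$ or $7$), whereas near a point $x_0\ne 0$ the map $y=x^2$ is locally order-preserving; one must also confirm that $Q$ is genuinely a polynomial of degree $\le 2$ so that no spurious higher-order zeros can appear.
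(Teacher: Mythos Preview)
Your proof is correct and follows essentially the same route as the paper's: the same substitution $e^{{\rm i}\theta}=(1-{\rm i}x)/(1+{\rm i}x)$, the same factorization $h(x)=x(1+x^2)h_1(x^2)$, the same use of \eqref{eq-u12} for the vanishing at $x=0$ and of \eqref{eq-u22} for $\psi_B'''(\pi)\ne 0$. The only cosmetic difference is in the exclusion of high-order zeros away from the origin: the paper argues that $h_1$ itself would need a triple zero at $y_0=x_0^2$, impossible since $h_1=Q/\prod_j(1+w_j^2y)^2$ with $\deg Q\le 2$, whereas you argue via the evenness of $Q(x^2)$ that a triple zero at $x_0$ forces one at $-x_0$, giving total multiplicity $\ge 6>4$. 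Both arguments are equivalent and equally short.
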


\begin{proof}
As above,
\begin{align*}
\psi'_{B}(\theta)&=\Re\sum_{1\le j\le 2}\frac{1+\lambda_je^{{\rm i}\theta}}{1-\lambda_je^{{\rm i}\theta}},\\
\psi''_{B}(\theta)&=2\Re\sum_{1\le j\le 2}\frac{{\rm i}\lambda_je^{{\rm i}\theta}}{(1-\lambda_je^{{\rm i}\theta})^2},\\
\psi'''_{B}(\theta)&=-2\Re\sum_{1\le j\le 2}\frac{\lambda_je^{{\rm i}\theta}+\lambda^2_je^{2{\rm i}\theta}}{(1-\lambda_je^{{\rm i}\theta})^3}.
\end{align*}
Set
$$
e^{{\rm i}\theta}=\frac{1-{\rm i}x}{1+{\rm i}x},\qquad x\in\mathbb R,
$$
and define $h(x)=\psi''_{B}(\theta)$. 

As above, by Theorem~\ref{thm:upper_bd}, we need to verify that
$$
h(0)=h'(0)=h''(0)=0,\quad h'''(0)\not=0,
$$
$h$ has no zeros of order $3$ except at the origin, and $\psi'''_{B}(\pi)\not=0$. 

First,
\begin{multline*}
h(x)=2\Re\sum_{1\le j\le 2}\frac{{\rm i}\frac{w_j-1}{w_j+1}\frac{1-{\rm i}x}{1+{\rm i}x}}{(1-\frac{w_j-1}{w_j+1}\frac{1-{\rm i}x}{1+{\rm i}x})^2}=
\frac{1+x^2}2\Re\sum_{1\le j\le 2}\frac{{\rm i}(w^2_j-1)}{(1+{\rm i}w_jx)^2}\\=
\frac{1+x^2}4 \Bigl(\frac{{\rm i}(w_1^2-1)}{(1+{\rm i}w_1x)^2}+\frac{{\rm i}(w_2^2-1)}{(1+{\rm i}w_2x)^2}-\frac{{\rm i}(w_1^2-1)}{(1-{\rm i}w_1x)^2}
-\frac{{\rm i}(w_2^2-1)}{(1-{\rm i}w_2x)^2} \Bigr)\\=
x(1+x^2)\sum_{1\le j\le 2} \frac{w_j(w^2_j-1)}{(1+w_j^2x^2)^2}.
\end{multline*}
Therefore, $h(0)=0$. Denote
$$
h_1(x)=\sum_{1\le j\le 2} \frac{w_j(w^2_j-1)}{(1+w_j^2x)^2}.
$$
As above, $h(x)=x(1+x^2)h_1(x^2)$, and we need to verify that
$$
h_1(0)=0,\quad h_1'(0)\not=0.
$$
By \eqref{eq-u12},
$$
h_1(0)=\sum_{1\le j\le 2} (w^3_j-w_j)=0,
$$
and
$$
h'_1(0)=-2\sum_{1\le j\le 2} (w^5_j-w^3_j)\not=0.
$$

If the function $h$ has a zero of order $3$ at $x_0\not=0$, then $h_1$ has a zero of order $3$ at $y_0=\sqrt{|x_0|}\not=0$. 
We have
$$
h_1(x)= \frac{Q(x)}{\prod_{1\le j\le 2}(1+w_j^2x)^2},
$$
where $Q$ is a polynomial of degree at most $2$, and, hence, $Q$ cannot have a zero of order $3$ at $y_0$.

Finally, by \eqref{eq-u22}, we have 
\begin{multline*}
\psi'''_{B}(\pi)=2\sum_{1\le j\le 2}\frac{\lambda_j-\lambda^2_j}{(1+\lambda_j)^3}=2\sum_{1\le j\le 2}\frac{\frac{w_j-1}{w_j+1}(1-\frac{w_j-1}{w_j+1})}{(1+\frac{w_j-1}{w_j+1})^3}\\=
\frac12\sum_{1\le j\le 2}\frac{w^2_j-1}{w^3_j}=
\frac12\sum_{1\le j\le 2}\bigl(w_j^{-1}-w_j^{-3}\bigr)\not=0.
\end{multline*}
\end{proof}

\subsection{The case $N=7$}

Here we give an explicit example of a finite Blaschke product
$B$ of degree 4 such that $\|\widehat{B^n}\|_{\ell^\infty}\asymp n^{-1/7}$.

Let $w_1,w_2\in\mathbb C\setminus\{1\}$ be such that $\Re w_1,\Re w_2>0$,
\begin{equation}
\Re(w_1+w_2)=\Re(w^3_1+w^3_2)=\Re(w_1^5+w_2^5)\not=\Re(w^7_1+w^7_2)
\label{eq-u1}
\end{equation}
and
\begin{equation}
\Re(w^{-1}_1+w^{-1}_2)\not=\Re(w^{-3}_1+w^{-3}_2).
\label{eq-u2}
\end{equation}
Set
\begin{gather*}
w_{j+2}=\overline{w_j},\qquad j=1,2,\\
\lambda_j=\frac{w_j-1}{w_j+1}, \qquad 1\le j\le 4,
\end{gather*}
and consider 
\[
B=\prod_{1\le j\le 4}b_{\lambda_j}.
\]

For example, we can choose
$$
w_1=1+\frac{2{\rm i}}{\sqrt{3}},\quad w_2=2+\frac{{\rm i}}{\sqrt{3}}.
$$
Then
\begin{gather*}
\Re(w_1+w_2)=\Re(w^3_1+w^3_2)=\Re(w_1^5+w_2^5)=3,\\
\Re(w^7_1+w^7_2)=-\frac{421}{9},\quad \Re(w^{-1}_1+w^{-1}_2)=\frac{81}{91},\\ 
\Re(w^{-3}_1+w^{-3}_2)=-\frac{122391}{753571}.
\end{gather*}

\begin{prop}
We have
\[
\|\widehat{B^{n}}\|_{\ell^\infty}\asymp n^{-1/7}.
\]
\end{prop}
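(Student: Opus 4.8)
The plan is to follow the same template that was used for the two $N=5$ examples, now adapted to degree $4$ and target exponent $n^{-1/7}$. By Theorem~\ref{thm:upper_bd}, it suffices to locate a zero of $\psi_B''$ of order exactly $N-2=5$ and to check that no zero of $\psi_B''$ has order $N-1=6$. I would single out $\theta=0$ as the candidate critical point (since the $\lambda_j$ come in conjugate pairs, $\psi_B$ is even, so $\theta=0$ is automatically a critical point of $\psi_B'$), and verify the vanishing conditions $\psi_B''(0)=\psi_B'''(0)=\psi_B^{(4)}(0)=\psi_B^{(5)}(0)=\psi_B^{(6)}(0)=0$ together with $\psi_B^{(7)}(0)\neq 0$.

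The computational heart is the substitution $e^{{\rm i}\theta}=(1-{\rm i}x)/(1+{\rm i}x)$, $x\in\mathbb R$, which turns $\psi_B''(\theta)$ into a rational function $h(x)$ of $x$; a zero of order $N$ of $h$ at $x_0$ corresponds to a zero of order $N$ of $\psi_B''$ at the matching $\theta_0$. First I would compute
\[
\psi_B'(\theta)=\Re\sum_{1\le j\le 4}\frac{1+\lambda_j e^{{\rm i}\theta}}{1-\lambda_j e^{{\rm i}\theta}},\qquad
\psi_B''(\theta)=2\Re\sum_{1\le j\le 4}\frac{{\rm i}\lambda_j e^{{\rm i}\theta}}{(1-\lambda_j e^{{\rm i}\theta})^2},
\]
then plug in $\lambda_j=(w_j-1)/(w_j+1)$ and the substitution. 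As in the $N=5$ case, $1-\lambda_j e^{{\rm i}\theta}$ becomes a multiple of $1+{\rm i}w_j x$, and using $w_{j+2}=\overline{w_j}$ the real part collapses to an odd rational function; I expect to get the form $h(x)=x(1+x^2)^{?}\,h_1(x^2)$ for some rational $h_1$, so that the vanishing of $\psi_B''$ and its first five derivatives at $0$ reduces to $h_1(0)=h_1'(0)=h_1''(0)=0$, $h_1'''(0)\neq 0$. Expanding $h_1$ near $0$ as a power series, each Taylor coefficient is (up to nonzero constants) a symmetric function of the $w_j$, and the three vanishing conditions should unwind exactly to $\Re(w_1+w_2)=\Re(w_1^3+w_2^3)=\Re(w_1^5+w_2^5)$, with the nonvanishing of the next coefficient giving $\Re(w_1^5+w_2^5)\neq \Re(w_1^7+w_2^7)$ — i.e. precisely the hypotheses \eqref{eq-u1}. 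Here it is worth keeping track of the exact powers of $(1+w_j^2 x)$ in the denominator so that one can argue, as before, that $h_1=Q/\prod(1+w_j^2x)^2$ with $\deg Q$ small enough that $Q$ (hence $h$) cannot have a zero of order $6$ away from the origin; combined with the local order being exactly $5$ at $0$, this rules out any order-$6$ zero of $\psi_B''$.

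The remaining point is to verify the one antipodal condition $\psi_B'''(\pi)\neq 0$ needed to pin down the behaviour at the other natural critical point $\theta=\pi$; plugging $e^{{\rm i}\pi}=-1$ into the formula for $\psi_B'''$ and simplifying $\lambda_j$ in terms of $w_j$ should reduce this to $\Re\bigl(\sum_j(w_j^{-1}-w_j^{-3})\bigr)\neq 0$, i.e. hypothesis \eqref{eq-u2}. Finally I would confirm with the explicit choice $w_1=1+2{\rm i}/\sqrt3$, $w_2=2+{\rm i}/\sqrt3$ that \eqref{eq-u1} and \eqref{eq-u2} indeed hold, using the numerical values already tabulated in the statement. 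The main obstacle I anticipate is purely bookkeeping: carrying the substitution and the conjugate-pairing through to isolate $h_1$ cleanly and correctly identify its low-order Taylor coefficients with the symmetric combinations $\Re(w_1^k+w_2^k)$ for $k=1,3,5,7$ — the algebra is heavier than in the $N=5$ case because of the higher power and the degree-$4$ product, and one must be careful that the denominator structure still forbids a spurious order-$6$ zero. Everything else is a routine repetition of the $N=5$ argument, and the conclusion $\|\widehat{B^n}\|_{\ell^\infty}\asymp n^{-1/7}$ then follows from Theorem~\ref{thm:upper_bd} with $N=7$.
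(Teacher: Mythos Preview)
Your plan is essentially the paper's proof. One bookkeeping correction: with $h(x)=x(1+x^2)h_1(x^2)$ (the exponent on $1+x^2$ is $1$), an order-$5$ zero of $h$ at $0$ corresponds to an order-$2$ zero of $h_1$ at $0$, not order $3$; so the correct conditions are $h_1(0)=h_1'(0)=0$ and $h_1''(0)\neq0$, which is exactly the \emph{two} equalities you wrote from \eqref{eq-u1}, and then $\deg Q\le 6$ together with the double zero of $Q$ at $0$ rules out zeros of $h_1$ of order $\ge 5$ away from the origin.
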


\begin{proof}
As above,  
\begin{align*}
\psi'_{B}(\theta)&=\Re\sum_{1\le j\le 4}\frac{1+\lambda_je^{{\rm i}\theta}}{1-\lambda_je^{{\rm i}\theta}},\\
\psi''_{B}(\theta)&=2\Re\sum_{1\le j\le 4}\frac{{\rm i}\lambda_je^{{\rm i}\theta}}{(1-\lambda_je^{{\rm i}\theta})^2},\\
\psi'''_{B}(\theta)&=-2\Re\sum_{1\le j\le 4}\frac{\lambda_je^{{\rm i}\theta}+\lambda^2_je^{2{\rm i}\theta}}{(1-\lambda_je^{{\rm i}\theta})^3}.
\end{align*}
Set
$$
e^{{\rm i}\theta}=\frac{1-{\rm i}x}{1+{\rm i}x},\qquad x\in\mathbb R,
$$
and define $h(x)=\psi''_{B}(\theta)$. Then $h$ has a zero of order $N$ at $x_0\in\mathbb R$ if and only if $\psi''_{B}$ has a zero of order $N$ at $\theta_0$, $e^{{\rm i}\theta_0}=(1-{\rm i}x_0)/(1+{\rm i}x_0)$. 

We need to verify that
$$
h(0)=h'(0)=h''(0)=h'''(0)=h^{(4)}(0)=0,\quad h^{(5)}(0)\not=0,
$$
$h$ has no zeros of order $5$ except at the origin, and $\psi'''_{B}(\pi)\not=0$. 

As above,
\begin{multline*}
h(x)=2\Re\sum_{1\le j\le 4}\frac{{\rm i}\frac{w_j-1}{w_j+1}\frac{1-{\rm i}x}{1+{\rm i}x}}{(1-\frac{w_j-1}{w_j+1}\frac{1-{\rm i}x}{1+{\rm i}x})^2}=
\frac{1+x^2}2\Re\sum_{1\le j\le 4}\frac{{\rm i}(w^2_j-1)}{(1+{\rm i}w_jx)^2}\\=
\frac{1+x^2}4\sum_{1\le j\le 2}\Bigl(\frac{{\rm i}(w^2_j-1)}{(1+{\rm i}w_jx)^2}+\frac{{\rm i}(\overline{w^2_j}-1)}{(1+{\rm i}\overline{w_j}x)^2}-\frac{{\rm i}(\overline{w^2_j}-1)}{(1-{\rm i}\overline{w_j}x)^2}
-\frac{{\rm i}(w^2_j-1)}{(1-{\rm i}w_jx)^2} \Bigr)\\=
x(1+x^2)\sum_{1\le j\le 4} \frac{w_j(w^2_j-1)}{(1+w_j^2x^2)^2}.
\end{multline*}
Therefore, $h(0)=0$. Denote
$$
h_1(x)=\sum_{1\le j\le 4} \frac{w_j(w^2_j-1)}{(1+w_j^2x)^2}.
$$
Then $h(x)=x(1+x^2)h_1(x^2)$. We need to verify that
$$
h_1(0)=h_1'(0)=0,\quad h_1''(0)\not=0.
$$
By \eqref{eq-u1},
$$
h_1(0)=\sum_{1\le j\le 4} (w^3_j-w_j)=0.
$$
Furthermore,
$$
h'_1(x)=-2\sum_{1\le j\le 4} \frac{w^3_j(w^2_j-1)}{(1+w_j^2x)^3}.
$$
Again by \eqref{eq-u1},
$$
h'_1(0)=-2\sum_{1\le j\le 4} (w^5_j-w^3_j)=0.
$$
Next,
$$
h''_1(x)=6\sum_{1\le j\le 4} \frac{w^5_j(w^2_j-1)}{(1+w_j^2x)^4},
$$
and by \eqref{eq-u2},
$$
h''_1(0)=6\sum_{1\le j\le 4} w^5_j(w^2_j-1)\not=0.
$$

If the function $h$ has a zero of order $5$ at $x_0\not=0$, then $h_1$ has a zero of order $5$ at $y_0=\sqrt{|x_0|}\not=0$. 
We have
$$
h_1(x)= \frac{Q(x)}{\prod_{1\le j\le 4}(1+w_j^2x)^2},
$$
where $Q$ is a polynomial of degree at most $6$. Since $h_1(0)=h_1'(0)=0$, $Q$ cannot have a zero of order $5$ at $y_0$.

Finally, by \eqref{eq-u2}, we have 
\begin{multline*}
\psi'''_{B}(\pi)=2\Re\sum_{1\le j\le 4}\frac{\lambda_j-\lambda^2_j}{(1+\lambda_j)^3}=2\Re\sum_{1\le j\le 4}\frac{\frac{w_j-1}{w_j+1}(1-\frac{w_j-1}{w_j+1})}{(1+\frac{w_j-1}{w_j+1})^3}\\=
\frac12\Re\sum_{1\le j\le 4}\frac{w^2_j-1}{w^3_j}=
\Re\sum_{1\le j\le 2}\bigl(w^{-1}_j-w^{-3}_j\bigr)\not=0.
\end{multline*}
\end{proof}

\section{\label{Proof-Theorem3} Proof of Theorem \ref{prop_phi} }

We recall that 
$$
e_{nm} =  \frac{(1 - \abs{\lambda_m}^2)^{1/2}}{z-\lambda_m} B^n.
$$
We set $g=S^{-1}e_{nm}$ so that $g = g(0)+ z e_{nm}$. Then 
\begin{equation} \label{etoile}
\|S^{-1}\|_{\ell^\infty_A\to\ell^\infty_A} \ge  \frac{\|\widehat{S^{-1}e_{nm}}\|_{\ell^\infty}}
{\|\widehat{e_{nm}}\|_{\ell^\infty}} 
= \frac{\|\widehat{g}\|_{\ell^\infty}}{\|\widehat{e_{nm}}\|_{\ell^\infty}}
\ge \frac{|g(0)|}{\|\widehat{e_{nm}}\|_{\ell^\infty}}.
\end{equation}
Let us first concentrate on estimating $\|\widehat{e_{nm}}\|_{\ell^\infty}$. We follow the same steps as in the proof of Theorem~\ref{thm:upper_bd}. We set $d_{nm}= \frac{e_{nm}}{(1-\abs{\lambda_m}^2)^{1/2}}$. We have 
$$
\widehat{d_{nm}}(k) = \frac{1}{2\pi} \int_0^{2\pi} \frac{B^n(e^{{\rm i}\theta})}{e^{{\rm i}\theta} - \lambda_m} e^{-{\rm i}k\theta} \,d\theta 
 = \frac{1}{2\pi} \int_0^{2\pi} \frac{e^{{\rm i}n\,f(\theta)}}{e^{{\rm i}\theta} - \lambda_m}  \,d\theta,
$$
where $f(\theta) = \psi(\theta) - \frac{k}{n} \theta$ and $\psi(\theta)=\arg(B(e^{{\rm i}\theta}))$. Using the same notation as in 
Theorem~\ref{thm:upper_bd}, we consider the sequence $(\xi_{\ell})_{\ell=1}^s$ of consecutive zeros of $\psi''$ on $[0,2\pi)$ with respective multiplicities $(N_\ell-2)_{\ell=1}^s$, $N_\ell \ge 3$. 
We have: 
\begin{multline*}
\widehat{d_{nm}}(k)  =\sum_{\ell=1}^{s}\int_{\xi_{\ell}-\epsilon}^{\xi_{\ell}+\epsilon}\frac{e^{{\rm i}n\,f(\theta)}}{e^{{\rm i}\theta} - \lambda_m}\,d\theta
  +\sum_{\ell=1}^{s-1}\int_{\xi_{\ell}+\epsilon}^{\xi_{\ell+1}-\epsilon}\frac{e^{{\rm i}n\,f(\theta)}}{e^{{\rm i}\theta} - \lambda_m}\,d\theta\\
  +\int_{0}^{\xi_{1}-\epsilon}\frac{e^{{\rm i}n\,f(\theta)}}{e^{{\rm i}\theta} - \lambda_m}\,d\theta
  +\int_{\xi_{s}+\epsilon}^{2\pi}\frac{e^{{\rm i}n\,f(\theta)}}{e^{{\rm i}\theta} - \lambda_m}\,d\theta.
\end{multline*}
From now on we put 
\[
\epsilon=n^{-1/N}\rightarrow0,\qquad n\rightarrow\infty,
\]
where $N = \max_{1 \le \ell \le s} N_\ell$.
Clearly 
\[
\Abs{\int_{\xi_{\ell}-\epsilon}^{\xi_{\ell}+\epsilon}\frac{e^{{\rm i}n\,f(\theta)}}{e^{{\rm i}\theta} - \lambda_m}\,d\theta}\le \frac{2\epsilon}
{\bigl|1-|\lambda_m|\bigr|},
\]
and therefore the first sum is bounded from above as 
\[
\Abs{\sum_{\ell=1}^{s}\int_{\xi_{\ell}-\epsilon}^{\xi_{\ell}+\epsilon}\frac{e^{{\rm i}n\,f(\theta)}}{e^{{\rm i}\theta} - \lambda_m}\,d\theta}\lesssim n^{-1/N}.
\]
Furthermore, $f''=\psi''$ does not vanish on the intervals $(0,\xi_{1}-\epsilon)$, 
$(\xi_{s}+\epsilon,2\pi)$ and $(\xi_{\ell}+\epsilon,\xi_{\ell+1}-\epsilon)$
for $\ell=1,\ldots, s-1$. Writing the Taylor expansion (of order $N_{\ell}-2$)
of $\psi''$ near $\xi_{\ell}$ we find 
\[
\psi''(\theta)=\frac{\psi^{(N_{\ell})}(\xi_{\ell})}{(N_{\ell}-2)!}\left(\theta-\xi_{\ell}\right)^{N_{\ell}-2}\left(1+\cO\left(\theta-\xi_{\ell}\right)\right),
\]
and it follows that if $\theta$ is close to $\xi_\ell$ but satisfies $\abs{\theta - \xi_\ell} \ge \varepsilon$, then
\[ \abs{\psi''_B(\theta)} \gtrsim n^{-(N_\ell-2)/N},\]
for $n$ large enough. Below we apply Lemma \ref{lem:VDCP} to the remaining integrals:
\begin{equation} \label{integral}
\int_{\xi_{l}+\epsilon}^{\xi_{l+1}-\epsilon} \frac{e^{{\rm i}n\,f(\theta)}}{e^{{\rm i}\theta}-\lambda_m}\,d\theta,\quad 
\int_{0}^{\xi_{1}-\epsilon} \frac{e^{{\rm i}n\,f(\theta)}}{e^{{\rm i}\theta}-\lambda_m}\,d\theta,\quad 
\int_{\xi_{s}+\epsilon}^{2\pi}\frac{e^{{\rm i}n\,f(\theta)}}{e^{{\rm i}\theta}-\lambda_m}\,d\theta.
\end{equation}
We describe how to get an upper estimate on the first integral in \eqref{integral}, the argument for two others being similar. We have 
\begin{multline*} 
\biggl|\int_{\xi_{l}+\epsilon}^{\xi_{l+1}-\epsilon} \frac{e^{{\rm i}n\,f(\theta)}}{e^{{\rm i}\theta}-\lambda_m}\,d\theta\biggr|\\ \le
\biggl|\int_{\xi_{l}+\epsilon}^{\xi_{l+1}-\epsilon} \Re\biggl(\frac{e^{{\rm i}n\,f(\theta)}}{e^{{\rm i}\theta}-\lambda_m}\biggr)\,d\theta\biggr|+
\biggl|\int_{\xi_{l}+\epsilon}^{\xi_{l+1}-\epsilon} \Im\biggl(\frac{e^{{\rm i}n\,f(\theta)}}{e^{{\rm i}\theta}-\lambda_m}\biggr)\,d\theta\biggr|.
\end{multline*} 
We set $\lambda_m=\rho_m\exp({\rm i}\theta_m)$ and 
\[ 
G(\theta)=\Re \left(\frac{1}{e^{{\rm i}\theta}-\lambda_m}\right) = \frac{\cos{\theta}-\rho_m \cos \theta_m}{(\cos{\theta}-\rho_m \cos \theta_m)^2+(\sin{\theta}-\rho_m \sin \theta_m)^2}.
\]
The function $G$ is bounded on $[0, 2\pi]$ and it vanishes twice on $[0, 2\pi)$. 
Furthermore, $G'$ has a finite number of zeros on $[0, 2\pi)$. Therefore, we can split every interval of integration into a (uniformly bounded in $n$) number of intervals so that $G$ is of constant sign and monotonic on each of them. Then 
\[
\abs{F''}\gtrsim n^{2/N}
\]
on these intervals and therefore, applying Lemma~\ref{lem:VDCP}, we obtain that the corresponding integrals are $\cO\left(n^{-1/N}\right)$. Applying the same steps with $G(\theta)= \Bigl(\dfrac{1}{e^{{\rm i}\theta}-\lambda_m}\Bigr)$, 
and estimating in the same way the remaining integrals in \eqref{integral}, we conclude that $\|\widehat{e_{nm}}\|_{\ell^\infty} \lesssim n^{-1/N}$. 
\\ 
Now we return to (\ref{etoile}). It remains to estimate $|g(0)|$.
We use that $g =S^{-1}e_{nm} \in K_B$, and that  
$$
K_B = \Biggl\{ \frac{P(z)}{\prod\limits_{i=1}^m (1-\bar{\lambda}_i z)^n}: \deg P \le nm - 1\Biggr\}.
$$  
Thus we can determine $g(0)$ by the relation  
\begin{equation} \label{g(0)}
 g(0) + (1-|\lambda_m|^2)^{1/2}  \frac{z(z-\lambda_m)^{n-1}}{(1-\bar{\lambda}_m z)^n} \prod_{j=1}^{m-1} \left(\frac{z-\lambda_j}{1-\bar{\lambda}_j z}\right)^n  \in K_B.
\end{equation}
Since
$$
\frac{z-\lambda}{1-\bar{\lambda}z} = -\frac1{\bar{\lambda}}+ \frac{(1/\bar{\lambda})-\lambda}{1-\bar{\lambda}z},\qquad 
\frac{z}{1-\bar{\lambda}z} = -\frac1{\bar{\lambda}}+ \frac{1/\bar{\lambda}}{1-\bar{\lambda}z},
$$
relation \eqref{g(0)} implies that
\[ 
g(0) + (1-|\lambda_m|^2)^{1/2}\prod_{j=1}^m \frac{1}{(-\bar{\lambda}_j)^n} + \frac{P(z)}{\prod_{j=1}^m (1 - \bar{\lambda}_j z)^n} \in K_B,
\]
where $\deg P \le  nm-1$. Therefore, the condition $g \in K_B$ implies that 
$$
g(0) = - (1-|\lambda_m|^2)^{1/2}\prod_{j=1}^m \frac{1}{(-\bar{\lambda}_j)^n}.
$$
Thus, \eqref{etoile} yields
\begin{equation} 
\label {result th3}
\|S^{-1}\|_{\ell^\infty_A\to\ell^\infty_A} \gtrsim \frac{n^{1/N}}{\prod_{j=1}^m |\lambda_j|^n},
\end{equation}
which completes the proof of part (i) because of \eqref{det1}.\\

To prove part (ii), we use the analytic expression of $\Phi$ established by Gluskin--Meyer--Pajor \cite{GMP}: 
\[
\Phi(\lambda_1,\dots,\lambda_1,\dots,\lambda_m,\dots,\lambda_m) = \sup \{ |\det T|\cdot \|T^{-1}\|  \}, 
\]
where we take the supremum by all norms in $\mathbb C^{nm}$ and all $T$ such that $\|T\|\le 1$ in the induced norm and 
$\sigma_T=\{\lambda_1, \ldots, \lambda_m\}$ with multiplicity $n$ of every eigenvalue. 
We conclude by setting $T=S$ and using \eqref{det1} and \eqref{result th3}. 
\qed

\begin{remark}\label{rem9} Here we establish part (ii) of Theorem~\ref{prop_phi} as a direct application of Theorem \ref{thm:upper_bd}.
To this aim we
reproduce and adapt the duality method used to prove the main result of \cite{SZ2}.
We recall the definition of the Wiener algebra, which is the subset
of $H(\mathbb{D})$ of absolutely convergent Fourier series,
\begin{align*}
W:=\Bigl\{f=\sum_{k\ge0}\hat{f}(k)z^{k}:\|f\|_W:=\sum_{k\ge0}\abs{\hat{f}(k)}<\infty\Bigr\}.
\end{align*}
Let $B$ be the finite Blaschke product with simple zeros at $\lambda_{1},\dots,\lambda_{m}$.
It is easily verified that if $f$ is in $W$ and $f(\lambda)=0$
with $\lambda\in\mathbb{D}$, then 
\[
\Bigl\|\frac{f}{z-\lambda}\Bigr\|_W\le\frac{\|f\|_W}{1-\abs{\lambda}}.
\]
In particular, $\frac{f}{z-\lambda}$ is also in $W$. 
Therefore, for any $h\in W$
with 
\[
h(\lambda_j)=h'(\lambda_j)=\ldots=h^{(n-1)}(\lambda_j)=0,\qquad j=1,\dots,m,
\]
we have $h/B^n\in W$.
With this observation we rewrite the Gluskin--Meyer--Pajor expression
for $\Phi$ as follows: 
\begin{multline}\label{star2}
\Phi(\lambda_1,\dots,\lambda_1,\dots,\lambda_m,\dots,\lambda_m)\\
=\inf\biggl\{ \|h\|_W-\abs{h(0)} : h\in B^{n}W,\, h(0)=\prod_{j=1}^{m}\lambda_j^n\biggr\}.
\end{multline}
Let
$L^{2}(\partial\mathbb{D})$ be the usual $L^{2}$ space on the unit circle 
$\partial\mathbb{D}$, equipped with the standard scalar product 
\[
\left\langle f,g\right\rangle :=\int_{-\pi}^{\pi}f(e^{{\rm i}\varphi})\overline{g(e^{{\rm i}\varphi})}\,\frac{d\varphi}{2\pi}.
\]
For $f=\sum_{k}\hat{f}(k)z^{k}$, $g=\sum_{k}\hat{g}(k)z^{k}\in H^2$, 
it is well known \cite{MR} that the $L^{2}(\partial\mathbb{D})$
scalar product can be written as 
\[
\left\langle f,\,g\right\rangle =\sum_{j\ge0}\hat{f}(j)\overline{\hat{g}(j)}.
\]
Let $h=B^ng$ with $g\in W$ and $h(0)=\prod_{j=1}^m\lambda_j^n$. 
Then
$$
\|h\|_W\cdot\|B^n\|_{\ell^\infty_A}\ge \bigl|\bigl\langle h,B^{n}\bigr\rangle\bigr|  =\bigl|\bigl\langle g,1\bigr\rangle\bigr|
  =|g(0)|=\Bigl|\frac{h(0)}{B^{n}(0)}\Bigr|=1.
$$
It follows that any candidate function $h$ in \eqref{star2}
satisfies 
\[
\|h\|_W\ge\frac{1}{\|B^{n}\|_{\ell^\infty_A}}, 
\]
and consequently 
\[
\Phi\left(\lambda_1,\dots,\lambda_1,\dots,\lambda_m,\dots,\lambda_m\right)\ge\frac{1}{\|B^n\|_{\ell^\infty_A}}-\prod_{j=1}^{m}
\abs{\lambda_j}^n.
\]
It remains to apply Theorem \ref{thm:upper_bd} to obtain that 
\[
\Phi(\lambda_1, \dots, \lambda_1, \dots, \lambda_m,\dots, \lambda_m) \gtrsim n^{1/N},\qquad n\to\infty.
\]
\end{remark}

\end{document}